\documentclass[11pt]{article}
\usepackage[a4paper, margin=1in]{geometry}
\usepackage{graphicx}
\usepackage{authblk}
\usepackage{hyperref}
\hypersetup{colorlinks=true,
            linkcolor=black,
            anchorcolor=black,
            citecolor=black}
\usepackage{caption}
\usepackage{amsmath,amsthm,amssymb}
\usepackage{mathrsfs}
\usepackage{amsfonts}
\usepackage{setspace}
\usepackage{multirow}
\usepackage[section]{placeins}
\usepackage{algorithm} 
\usepackage{algpseudocode}
\usepackage{bm}
\usepackage{cleveref}
\usepackage[authoryear]{natbib}

\setcitestyle{authoryear,open={(},close={)}}
\crefname{table}{Table}{Tables}
\crefname{equation}{Eq.}{Eqs.}

\usepackage{threeparttable}
\usepackage{booktabs} % partial horizontal lines with gaps in a table
\usepackage{setspace} % line spacing
\usepackage{tabularx}
\usepackage{float}
\usepackage{subcaption}
\newtheorem{lemma}{Lemma}
\newtheorem{proposition}{Proposition}
\newtheorem{definition}{Definition}

\newtheorem{remark}{Remark}
\usepackage{titlesec}
\usepackage{pifont}
\usepackage{arydshln}
\usepackage{rotating}
\usepackage[normalem]{ulem}

\title{Dominance-Based Feasibility Inference for Packing-Constrained Pickup and Delivery Problems}
\date{}
\author[1]{Siqiao Li}
\author[1]{Mahnam Saeednia}
\author[2]{Patrick Stokkink}
\affil[1]{\footnotesize Department of Transport and Planning, Faculty of Civil Engineering and Geosciences, Delft University of Technology, Delft, Netherlands}
\affil[2]{\footnotesize Department of Engineering Systems and Services, Faculty of Technology, Policy and Management, Delft University of Technology, Delft, Netherlands}

\newcommand{\route}{\kappa}
\newcommand{\placement}{\mathcal{Q}}

\begin{document}

\maketitle

\begin{abstract}
\normalsize
Routing and packing are intrinsically coupled in transport problems, requiring joint planning for cost-efficient and physically realizable solutions. We study a pickup and delivery problem with two-dimensional packing constraints (2P-PDP). Unlike vehicle routing variants where items are loaded before vehicles leave the depot and packing is validated only once, the 2P-PDP induces non-monotonic free-space evolution, substantially increasing feasibility-checking complexity. To address this bottleneck, we propose a generic dominance-based feasibility framework that is embeddable in a broad class of exact and heuristic routing algorithms. Under no-relocation constraints, inferring feasibility from a previously verified packing state requires preserving the pickup and delivery order of onboard items. To this end, we introduce an order-preserving mapping that jointly captures geometric containment and sequence compatibility, enabling dominance-based inference by embedding the new packing state into a verified reference plan. To further reduce dominance-screening overhead, we design three search rules to guide candidate exploration and tailored strategies to store, retrieve, and prioritize verified states. Computational experiments show that the proposed approach reduces feasibility-checking time by up to 42\% compared to a benchmark without dominance. The improvement stems from reducing exact packing-procedure calls, shifting verification effort away from the most computationally expensive stage.
\vspace{10pt}

\noindent \textbf{Keywords:} Pickup-and-delivery problem; Orthogonal packing problem; Order-preserving mapping
\end{abstract} 

\section{Introduction}\label{sec:introduction}
In many vehicle routing problems, routing and packing decisions are tightly coupled: cost-efficient routes must also admit feasible arrangements of items within limited vehicle load space. The routing component, studied through variants of the Vehicle Routing Problem (VRP), determines the customer sequence, whereas the packing component, related to the Orthogonal Packing Problem (OPP), determines whether the associated items admit a feasible spatial arrangement within the vehicle.
Their coupling is not merely structural but causal: the visiting sequence dictates the order in which items are loaded and unloaded, which in turn governs how the available packing region evolves along the route. Ignoring this link produces routes that are cost-efficient on paper but physically unrealizable in execution, a failure mode that becomes increasingly costly in dynamic settings where routing plans must be recomputed repeatedly within an iterative search process. Yet jointly optimizing routing and packing is computationally challenging: both VRP and OPP are NP-hard in isolation, and their combination induces a combinatorial explosion in the joint search space. This motivates algorithmic frameworks capable of resolving packing feasibility efficiently without disrupting the broader routing search.

Despite this difficulty, a growing body of work has developed integrated routing-and-packing methods, predominantly for vehicle routing problems with two-dimensional loading constraints (2L-VRPs). In this setting, packages are loaded once at the depot before vehicle departure, and the main packing complication is to ensure that packages can be unloaded at each stop without displacing others, typically through a last-in--first-out discipline that keeps the packing subproblem relatively structured. See, e.g., \citet{iori2007exact,crainic2008extreme,zachariadis2012pallet,hokama2016branch,mahvash2017column,bortfeldt2020split,rajaei2022split,zhang2022branch_ts,ji2024branch,xu2025branch,ferreira2025exact,chi2025solving}. The pickup-and-delivery problem (PDP) setting is fundamentally more demanding. Since loading and unloading occur throughout route execution, the onboard load evolves non-monotonically. Moreover, pickup--delivery precedence requires each picked-up item to remain in its assigned position until delivery, while items being picked up or delivered must be accessible from the handling side. Packing feasibility therefore depends jointly on the loading sequence, the unloading sequence, and the route-dependent state of the available packing region: a three-way coupling absent in depot-loaded settings and not captured by standard 2L-VRP feasibility techniques. How to verify and exploit packing feasibility efficiently within this more demanding structure remains largely unresolved.

To close this gap, this paper studies the PDP with two-dimensional packing constraints (2P-PDP). The central computational difficulty is that packing feasibility must be verified repeatedly inside the routing algorithm, often at every candidate route evaluation, and these checks constitute the dominant computational bottleneck in practice. We address this by developing a dominance-based feasibility framework that exploits structural relationships among route-dependent packing states to avoid redundant feasibility checks. Rather than treating each feasibility check as an independent packing problem, the framework reuses information from previously verified states to infer feasibility for new states, substantially reducing the number of explicit packing checks required. The framework is designed as a self-contained module that can be embedded within both exact and metaheuristic routing algorithms without modifying the routing layer. While developed in the PDP context, the framework applies more broadly to routing variants in which packing feasibility must be verified repeatedly during route evaluation, such as dynamic variants. Our main contributions are as follows.

(1) We introduce a sequence-aware dominance relation over partial packing states that accounts for the dynamic, route-dependent evolution of the onboard load. This extends classical dominance arguments from static packing settings to the PDP context, where feasibility depends jointly on loading and unloading sequences, allowing redundant feasibility checks to be avoided through dominance.

(2) We develop an Order-Preserving Mapping (OPM) search tree that organizes feasible packing plans according to the ordering relationships among their associated loading and unloading sequences. This structure captures the combinatorial dependencies between spatial arrangements and temporal sequencing, providing a tractable search space for systematic dominance comparisons.

(3) We design structured storage, retrieval, and recency-aware sampling strategies for verified feasibility states. By prioritizing plausible and recently verified candidates, these strategies accelerate dominance-based inference, enabling the feasibility of unverified states to be determined through targeted comparisons and early termination.

The remainder of this paper is organized as follows. Section~\ref{sec:literature} reviews the related literature. Section~\ref{sec:problem} describes the 2P-PDP and its main operational constraints. Section~\ref{sec:alg} presents the dominance-based framework for packing feasibility verification, and Section~\ref{sec:strategies} introduces acceleration strategies for dominance checking. Section~\ref{sec:experiments} reports the computational experiments. Section~\ref{sec:conclusions} concludes the paper.

\section{Literature Review}\label{sec:literature}
Existing literature integrating routing and packing predominantly considers simplified VRP variants without pickup operations, where all packages are initially loaded at the depot and only the unloading sequence needs to be coordinated. Among studies that incorporate both pickup and delivery processes, the interaction between the two is often weakened by separating pickup and delivery requests: the depot acts as the delivery location for pickup requests and the pickup location for delivery requests \citep{zachariadis2016vehicle,reil2018heuristics,pinto2020variable}. Additional restrictions, such as a delivery-first--pickup-second service order, are also introduced to further reduce the coupling between the two operations \citep{bortfeldt2015hybrid}. Under these settings, packing feasibility along a route reduces to a small number of static checks.

Methodologically, these routing-and-packing problems are mainly addressed either by branch-and-cut-based approaches \citep{iori2007exact,hokama2016branch,zhang2022branch1,zhang2022branch_ts,xu2025branch,ferreira2021exact} or by heuristic approaches \citep{bortfeldt2012hybrid,wei2015variable,wei2018simulated,meliani2022tabu,krebs2023effective,ferreira2024variable,wang2025collaborative}. While effective for 2L-VRP variants involving only a limited number of packing checks, these approaches do not directly extend to PDP settings, where pickup and delivery operations are intertwined and packing feasibility must be verified dynamically along the route.

For PDP variants, onboard loads evolve dynamically along the route, making the feasibility of each subsequent operation dependent on all previous ones. To the best of our knowledge, only two studies address this issue, both under three-dimensional packing constraints. \citet{mannel2016hybrid} reduce dynamic packing verification by restricting each route to independent segments, so that all items picked up within a segment are also delivered within that segment and the vehicle is empty at the segment boundary. \citet{mannel2018solving} further relax this restriction and evaluate packing feasibility at identified checkpoints along a route. In both studies, routing decisions are obtained via a large neighborhood search and packing feasibility is assessed through a tree-search-based packing procedure.
A common feature of these approaches is that every encountered packing state is verified by solving a packing problem from scratch. While straightforward, this can lead to substantial computational overhead when many feasibility checks are required during route search. These approaches do not exploit relationships among route-dependent packing states; in particular, they do not use previously verified feasible states to support subsequent feasibility checks. In this paper, we address this gap through a dominance-based feasibility framework that formalizes when the feasibility of one packing state can be inferred from another, thereby reducing redundant packing verification during route search.

\section{Problem Description}\label{sec:problem}
The inputs consist of a node set $N$, a request set $R$, and a homogeneous vehicle fleet $V$. The node set includes a depot, from which vehicles depart and to which they must return, together with all pickup and delivery nodes. Each pair of nodes $(n, n') \in N \times N$ is associated with a travel distance $\tau_{n,n'}$. A request $r\in R$ is defined by a pickup node $p_r$, a delivery node $d_r$, and a rectangular item of length $l_r$ and width $w_r$. If a request involves multiple items, it is decomposed into item-level requests by duplicating the corresponding pickup and delivery nodes.

Vehicles have identical rectangular loading space of length $L$ and width $W$. They are rear-loaded, meaning that loading and unloading operations are performed exclusively from one end of the loading space.
Deploying a vehicle incurs a fixed cost $c^{\mathrm d}$, and traveling from nodes $n$ to $n'$ incurs an operational cost proportional to $\tau_{n,n'}$, with unit-distance cost $c^{\mathrm o}$. Each unserved request incurs a penalty cost $c^{\mathrm p}$. The 2P-PDP determines the fleet size and vehicle routes to minimize total deployment, operating, and penalty costs.

A feasible solution to the 2P-PDP must satisfy both routing and packing feasibility. Routing feasibility requires that (R1) each vehicle route starts and ends at the depot; (R2) each transportation request is assigned to exactly one vehicle, with its pickup and delivery nodes visited exactly once; and (R3) the total travel distance of each route does not exceed a prescribed limit. Additional routing-side restrictions, such as vehicle weight capacities, are omitted here but can be incorporated within the same routing framework in a straightforward manner. 

Packing feasibility requires that (P1) each item is placed entirely within the vehicle loading space and no two items overlap; (P2) items may be rotated horizontally by $90^\circ$ within the loading plane; and (P3) the no-relocation requirement is satisfied throughout route execution.
The no-relocation requirement reflects the operational setting in which items, once loaded, are not rearranged before their delivery. Such rearrangements would require extra handling during route execution, potentially delaying pickup and delivery services and increasing operational burden. This assumption is also common in packing-constrained vehicle routing studies \citep{Gendreau2006ts,iori2007exact,cote2014exact}.
The no-relocation requirement in (P3) has two rules: (P3a) the operation-accessibility rule, which applies to the item involved in a loading or unloading operation and requires that this operation be executable without relocating any other onboard item; and (P3b) the consistent-placement rule, which applies to items that remain onboard across consecutive operations and requires that their coordinates and orientations remain unchanged throughout their onboard intervals, regardless of changes in the set of co-loaded items. 

As illustrated in Figure~\ref{fig:packing_condition}, a Cartesian coordinate system is adopted to evaluate the packing conditions: the origin is located at the front-left corner of the loading space, the $x$-axis spans the width direction, and the $y$-axis spans the length direction. The length is bounded by $y=L$. Loading and unloading operations are performed from the rear end of the compartment; thus, larger $y$-coordinates indicate positions closer to the handling side.
Figure~\ref{fig:packing_condition} illustrates the no-relocation requirement under this coordinate system.
Constraint (P3a) is illustrated in Figures~\ref{fig:packing_condition}b and~\ref{fig:packing_condition}c, where items $i'$ and $i$ are accessible from the handling side for loading and unloading, respectively. Constraint (P3b) is illustrated by item $j$, whose coordinates and orientation remain unchanged across the feasible packing configurations.

\begin{figure}[htbp]
    \centering
    \includegraphics[width=\linewidth]{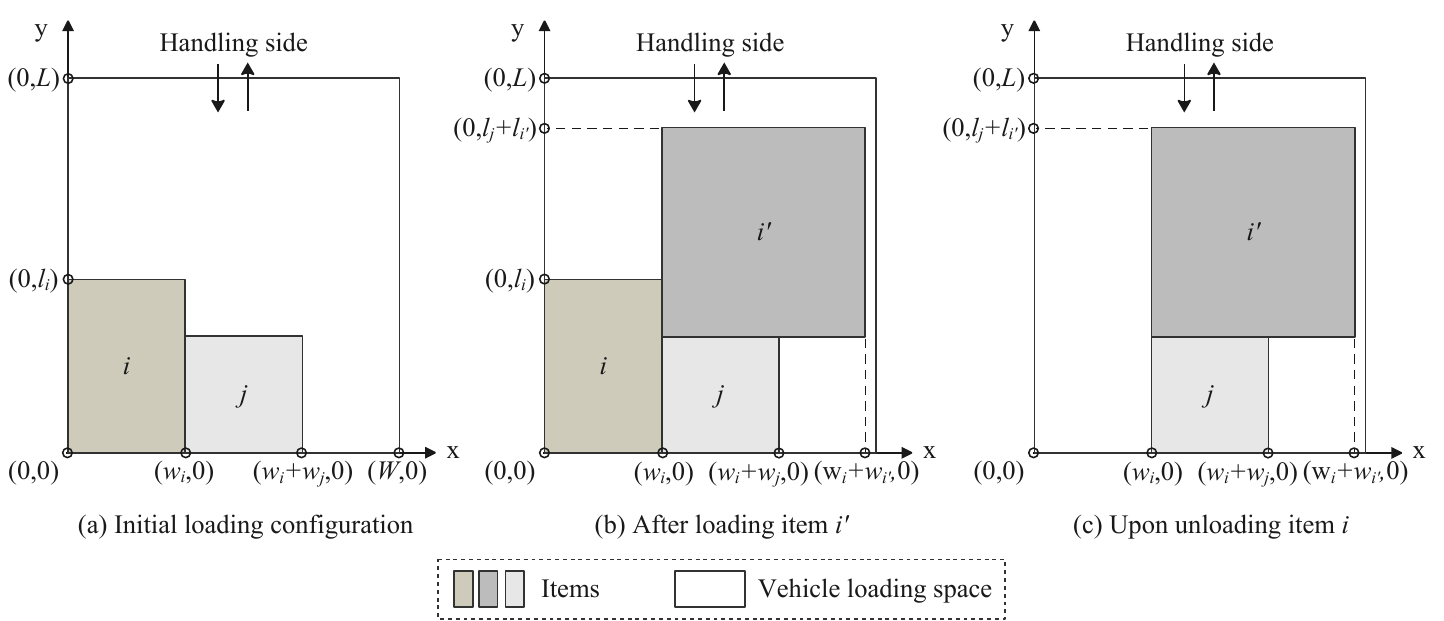}
    \caption{Illustration of the no-relocation requirement under the vehicle coordinate system.}
    \label{fig:packing_condition}
\end{figure}

Relative to a 2L-VRP, the additional computational difficulty of a 2P-PDP stems primarily from Constraint (P3). Since loading and unloading operations are interleaved along a route, with onboard loads changing after each event, Constraint (P3a) must hold at every operation to ensure accessibility of the relevant item placement. Meanwhile, Constraint (P3b) links these local feasibility requirements across the route by requiring items that remain onboard to preserve their coordinates and orientations over time. As a result, feasibility in a 2P-PDP depends not only on the packing configuration at individual operations, but also on its consistency over the entire sequence of route events.

The above conditions specify the routing and packing feasibility requirements considered in this study. Appendix~\ref{sec:app_exa} provides the constraint programming (CP) formulation corresponding to the packing constraints (P1)--(P3) for a given set of items and their pickup and delivery orders.

\section{Dominance-Based Framework for Packing Feasibility Verification}\label{sec:alg}
To address the computational bottleneck, this section develops a generic dominance-based framework that can be embedded within iterative solution methods in which routing decisions and packing-feasibility verification are handled as separate components.
Section~\ref{sec:alg_ver} presents a hierarchical feasibility-checking framework based on a limited set of necessary verifications. Section~\ref{sec:alg_dom} develops the proposed dominance-based screening method.

\subsection{Hierarchical Feasibility-Checking Framework}\label{sec:alg_ver}

Packing feasibility checks constitute the primary computational bottleneck. As packing feasibility evolves with the temporal changes of onboard loads in a PDP, an effective verification procedure should first identify the route positions at which packing states need to be checked. 
Section~\ref{sec:alg_ver_sopp} addresses this question by introducing the concept of Sequences of Open Pickup Points (SOPPs) together with a sequential verification procedure. 
Once the verification checkpoints are identified, the remaining challenge is to perform feasibility checks efficiently. To this end, we develop a hierarchical feasibility-checking scheme that limits the use of exact packing routines. Section~\ref{sec:alg_ver_hier} presents this hierarchical scheme.

\subsubsection{Identification of Verification Checkpoints} \label{sec:alg_ver_sopp}

For a given route, the packing state tends to become more constrained as items are picked up and less constrained as items are delivered. Thus, not every route position introduces a distinct packing state: some positions need not be checked separately because their feasibility is implied by more restrictive states encountered along the same sequence. Verifying packing feasibility at every route position is therefore unnecessary and computationally inefficient. This section identifies a limited set of checkpoints that capture these most restrictive packing states.

To this end, we adopt the SOPP concept introduced by \citet{mannel2018solving}, which identifies pickup sequences whose items remain simultaneously onboard and therefore interact in the packing configuration.

\begin{definition}[Sequence of open pickup points]
\label{def:sopp}
A Sequence of Open Pickup Points (SOPP) in a route is a sequence of pickup nodes satisfying: (i) the last pickup node in the sequence is immediately followed by a delivery node in the route; and (ii) the sequence contains exactly all open pickup nodes, up to and including its last pickup node, whose corresponding delivery nodes appear after this last pickup node in the route.
\end{definition}

By definition, pickup nodes whose items have already been delivered are excluded from an SOPP. Hence, all items associated with an SOPP remain undelivered after its last pickup and are therefore simultaneously onboard.
The most restrictive packing state associated with the SOPP therefore occurs at this position, which motivates Definition~\ref{def:non-dominated_arc}.

\begin{definition}[Non-dominated packing arc]
\label{def:non-dominated_arc}
Given an SOPP, its associated Non-Dominated Packing Arc (NDPA) is the arc immediately following the last pickup node in the sequence.
\end{definition}

The onboard load at an NDPA consists of all items picked up but not yet delivered at that point. Consequently, this arc represents a non-dominated packing state, and feasibility verification at the NDPA is sufficient to characterize the SOPP: earlier positions represented by subsequences of the SOPP involve only subsets of the open items present at the NDPA.
An illustration of a route and its three SOPPs is provided in \autoref{fig:sopp}. For the NDPA of each SOPP, the corresponding local packing plan must satisfy Constraints~(P1)--(P3a). In addition, request $r_1$ remains onboard throughout the three SOPPs, so Constraint~(P3b) requires its position and orientation to be identical across the corresponding local packing plans.

\begin{figure}[htbp]
    \centering
    \includegraphics[width=0.9\linewidth]{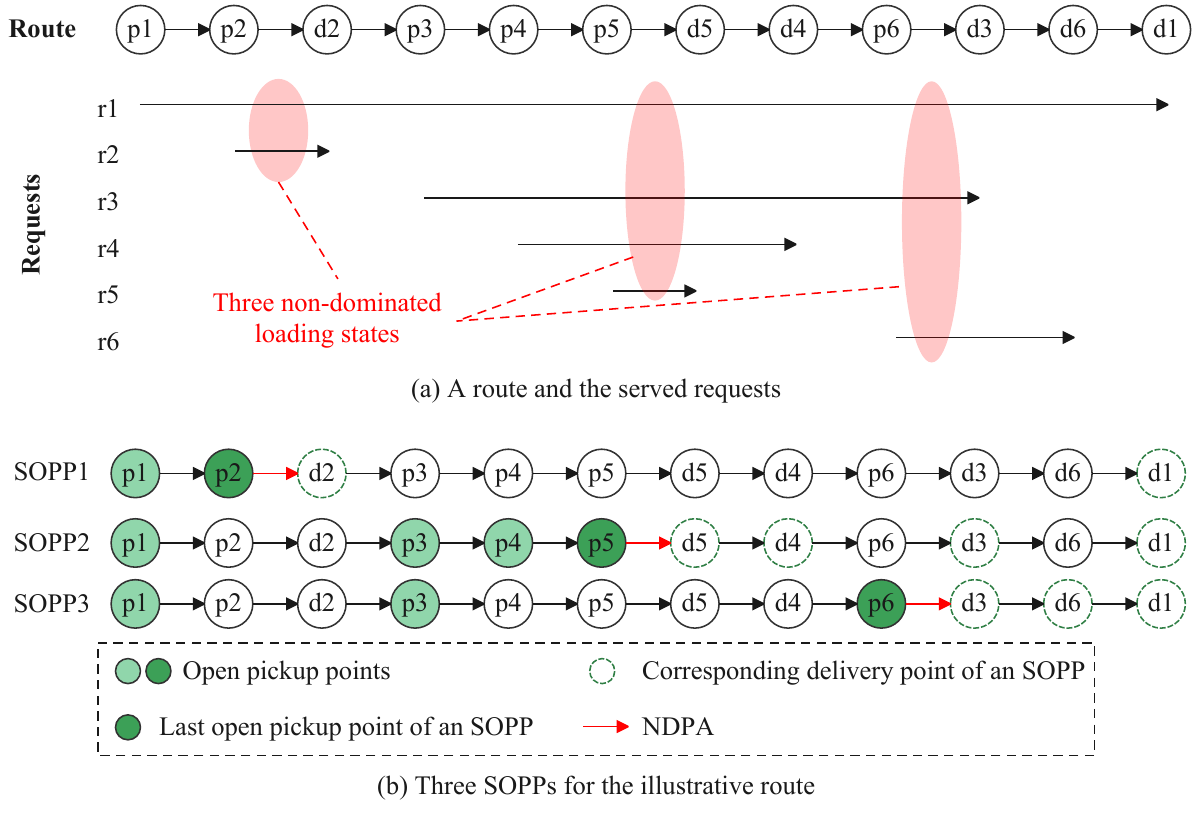}
    \caption{An illustrative route and its SOPPs}
    \label{fig:sopp}
\end{figure}

The above definitions localize packing feasibility to a finite set of critical packing states. Proposition~\ref{prop:route_feasibility} connects these local checks to route-level packing feasibility.

\begin{proposition}[Route packing feasibility via sequential SOPP verification]
\label{prop:route_feasibility}
Let $\route=(n_0,n_1,\ldots,n_m)$ be a route and let $A(\route)=(a_0,a_1,\ldots,a_{m-1})$, where $a_\ell=(n_\ell,n_{\ell+1})$ for $\ell=0,\ldots,m-1$, denote its induced arc sequence. Let $\Psi_\route=(\psi_1,\psi_2,\ldots,\psi_k)$ denote the ordered collection of SOPPs associated with route $\route$, indexed such that $\psi_i$ precedes $\psi_{i+1}$ if and only if the NDPA of $\psi_i$ is $a_\ell$ and the NDPA of $\psi_{i+1}$ is $a_{\ell'}$ with $\ell<\ell'$.
Route $\route$ is packing-feasible if and only if: (i) every $\psi_i\in\Psi_\route$ is feasible at its NDPA; and (ii) for every pair of SOPPs $\psi_i$ and $\psi_{j}$, each shared item has the same position and orientation in their packing plans.
\end{proposition}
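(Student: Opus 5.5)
We prove the two directions separately, working with a global packing plan for route $\route$. Such a plan assigns to each item one position and one orientation, held over its whole onboard interval, so that (P1)--(P3) hold at every arc. The basic fact is that the onboard set at arc $a_\ell$ is exactly the set of open items at that arc. In the forward direction, at each NDPA we restrict a feasible global plan to the onboard items and obtain local plans, which gives (i). Condition (ii) follows from (P3b): a shared item of $\psi_i$ and $\psi_j$ is onboard continuously from its pickup to its delivery, and both NDPAs lie in that interval. Hence its coordinates and orientation are identical in the two restricted plans.

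For the converse we build a global plan from the local ones. Every item $r$ lies in at least one SOPP. After its pickup, the route eventually reaches a pickup immediately followed by a delivery (at the latest, the pickup preceding the first delivery at or after $p_r$), and $d_r$ comes after that pickup. The SOPP ending there contains $r$. We give $r$ the position and orientation it has in any SOPP containing it; by (ii) this choice is well defined. It remains to check (P1), (P3a) and no-overlap at every arc $a_\ell$, not only at NDPAs. The key lemma is that the onboard set $O_\ell$ at any arc is a subset of the onboard set at some NDPA. To see this, take the maximal run of consecutive pickups that starts at or after $n_\ell$ before the next delivery. Its last pickup $n_{\ell'}$ defines an NDPA $a_{\ell'}$ with $\ell'\ge\ell$, and no delivery occurs between $a_\ell$ and $a_{\ell'}$, so $O_\ell\subseteq O_{\ell'}$. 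If $n_{\ell+1}$ is a delivery, then $a_\ell$ is itself an NDPA when $n_\ell$ is a pickup. When $n_\ell$ is a delivery, $O_\ell$ is a subset of the onboard set at the previous arc, and we proceed backwards to a pickup arc. Either way the local plan at that NDPA, restricted to $O_\ell$, is a non-overlapping placement inside the vehicle that agrees with the global assignment. This gives (P1) and (P2) at every arc, and (P3b) holds by construction.

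The main obstacle is (P3a) at intermediate operations. We must show that accessibility of an item loaded or unloaded at node $n_\ell$ is implied by feasibility at some NDPA. What ``feasible at its NDPA'' means here is a local plan satisfying (P1)--(P3a) for the loading order of the SOPP and the subsequent unloading order of its items. So we would interpret (i) in that sense: the CP model of Appendix~\ref{sec:app_exa} encodes accessibility for each pickup in the SOPP and for each delivery of its items against the co-loaded items of the SOPP. Accessibility at an operation only requires that no onboard item blocks the rear-ward path of the handled item. The onboard set at that operation is a subset of the set considered in the SOPP whose pickups or deliveries include this operation. Removing items cannot create a blocking item, so accessibility transfers. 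The delicate case is a delivery at an arc not itself preceding an NDPA, where the onboard set may mix items from two SOPPs. We handle it by assigning each delivery to the SOPP whose NDPA is the last one before it. Every item onboard at that delivery was already onboard at that NDPA, since only deliveries lie in between. Therefore the unloading accessibility checked in that SOPP's local plan applies.
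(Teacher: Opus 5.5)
Your proof is correct and follows essentially the same route as the paper's. For necessity you restrict a feasible route plan to each NDPA. For sufficiency you glue the NDPA plans via (ii) and transfer containment, non-overlap and accessibility to every other route position through inclusion of onboard sets. Your version also makes explicit two things the paper leaves implicit: arcs inside runs of deliveries, and which SOPP certifies each operation.

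One small precision is needed for pickups. The certifying SOPP should be stated as the one whose NDPA is the first at or after that pickup. A later SOPP containing the same item need not contain every item that was onboard when it was loaded.
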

\begin{proof}
Let $I(\psi_i)$ denote the onboard item set of SOPP $\psi_i$ at its NDPA, and let $\pi_i(q)$ denote the position and orientation assigned to item $q\in I(\psi_i)$ in the packing plan of $\psi_i$.\\
($\Rightarrow$) Suppose route $\route$ is packing-feasible, i.e., Constraints (P1)--(P3) hold. Then every packing state along $\route$ satisfies the packing constraints. In particular, the packing state at the NDPA of each $\psi_i\in\Psi_\route$ is feasible, so condition~(i) holds. By the consistent-placement rule~(P3b), for any $\psi_i,\psi_j\in\Psi_\route$ and any $q\in I(\psi_i)\cap I(\psi_j)$, the position and orientation of item $q$ cannot change. Hence condition~(ii) holds.\\
($\Leftarrow$) Suppose conditions~(i) and~(ii) hold. By condition~(i), every SOPP admits a feasible packing plan at its NDPA, which verifies the most restrictive packing state represented by that SOPP. Earlier route positions represented by subsequences of the SOPP involve only subsets of the open items present at the NDPA. Therefore, restricting the NDPA packing plan to the corresponding item subset preserves containment and non-overlap, so Constraints~(P1) and~(P2) hold along the route. Moreover, because the SOPP feasibility check incorporates operation accessibility for the route operations represented by that SOPP, Constraint~(P3a) holds along the route. 
By condition~(ii), the packing plans of all SOPPs agree on every shared item; hence, these local plans can be combined along the route without relocating any item that remains onboard across multiple SOPPs, thereby satisfying Constraint~(P3b). Therefore, all packing constraints are satisfied throughout $\route$, and route $\route$ is packing-feasible.
\end{proof}

\begin{remark}
Proposition~\ref{prop:route_feasibility} justifies a sequential verification of packing feasibility: SOPPs are checked in the order $\psi_1, \psi_2, \ldots, \psi_k$. As soon as any $\psi_i$ is packing-infeasible or a placement inconsistency is detected, the route is declared infeasible and verification terminates. 
\end{remark}

\subsubsection{Hierarchical Procedure for Packing Feasibility Verification}\label{sec:alg_ver_hier}
Algorithm~\ref{alg:feasibility_check} presents the hierarchical feasibility-checking framework for an unverified route under the packing conditions defined in Section~\ref{sec:problem}. The procedure is organized in increasing order of expected computational burden. It first applies a conservative heuristic packing check, which is inexpensive and can quickly verify packing states. If the heuristic check is inconclusive, dominance-based screening is used to infer feasibility from previously verified packing states and thereby avoid exact verification when possible. An exact method is invoked only when both the heuristic check and dominance screening fail to establish feasibility. This ordering reflects the role of the hierarchy: inexpensive tests are used to resolve easy cases, while the computationally demanding exact packing procedure is reserved for cases that cannot be verified otherwise. The relative order of the heuristic and dominance checks may be adjusted depending on their expected computational burden; in particular, dominance screening can be placed first when heuristic verification is comparatively expensive. The framework can accommodate any exact and conservative heuristic packing procedures; in this study, these are instantiated by a CP approach and a maximum-open-space (MOS)-based heuristic, respectively, as detailed in Appendices~\ref{sec:app_exa} and~\ref{sec:app_heu}.

\begin{algorithm}[htbp]
\caption{Hierarchical packing-feasibility check for a route}
\label{alg:feasibility_check}
\begin{algorithmic}[1]
\State \textbf{Input:} unverified route $\route$; verified SOPP repository $\bar \Psi$
\State $\Psi_\route \gets$ ordered collection of SOPPs associated with route $\route$
\For{each $\psi \in \Psi_\route$}
    \If{the total area of items in $\psi$ exceeds the vehicle loading space}
        \State \Return False
    \EndIf
    \If{\textsc{HeuristicPack}$(\psi)$ returns feasible} \Comment{Appendix~\ref{sec:app_heu}}
        \State \textbf{continue}
    \ElsIf{$\exists\,\bar{\psi}\in\bar{\Psi}$ satisfying the sequence-aware dominance condition with respect to $\psi$}
    \Comment{Section~\ref{sec:alg_dom}}
        \State \textbf{continue}
    \ElsIf{\textsc{ExactPack}$(\psi)$ returns feasible} \Comment{Appendix~\ref{sec:app_exa}}
        \State $\bar \Psi \gets \bar \Psi \cup \{\psi\}$
    \Else
        \State \Return False
    \EndIf
\EndFor
\State \Return True
\end{algorithmic}
\end{algorithm}

\subsection{Dominance-Based Screening Method}\label{sec:alg_dom}
\subsubsection{Dominance Definition via Order-Preserving Mappings}
The difficulty of dominance checking lies in jointly accounting for geometric packing feasibility and sequence-dependent accessibility. Under rear-end accessibility, the no-relocation constraint~(P3a) is sequence-dependent in two loading directions. On the pickup side, items picked up later must remain loadable when inserted into the compartment, which leads to the reverse pickup order. On the delivery side, items delivered earlier must remain unloadable when removed, which leads to the delivery order. We refer to these two requirements as the pickup-side and delivery-side no-relocation conditions, denoted by NR-p and NR-d, respectively.

To exploit dominance, we interpret a verified feasible packing plan as a reference structure for checking a new packing state. At the geometric level, dominance can be viewed as a static bin--piece embedding problem: the placed items in the verified plan define reference bins, while the items in the new packing state are treated as pieces to be embedded into these bins. 
Figure~\ref{fig:bin_piece_example} illustrates the bin--piece interpretation using two separate mappings, one under the delivery-side order required by NR-d and one under the pickup-side order required by NR-p. Without sequence-dependent accessibility, geometric containment alone would be sufficient to infer feasibility of the new packing state. In the 2P-PDP, however, dominance verification must also preserve the relevant loading and unloading orders. This motivates the order-preserving mapping introduced in Definition~\ref{def:opm}, which integrates geometric containment with sequence compatibility.

\begin{figure}[H]
    \centering
    \includegraphics[width=0.9\linewidth]{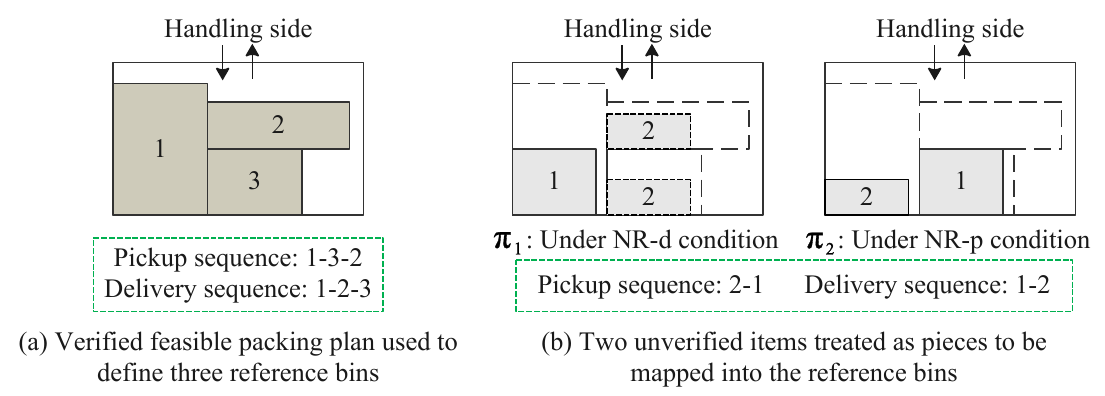}
    \caption{Bin--piece interpretation for dominance checking under separate NR-p and NR-d conditions. Under NR-d, piece $2$ can be mapped to two reference bins, yielding two candidate placements.}
    \label{fig:bin_piece_example}
\end{figure}

\begin{definition}[Order-preserving mapping]
\label{def:opm}
Let $\mathcal{I}$ and $\mathcal{J}$ denote two item sequences induced by two SOPPs, where both sequences are indexed according to the same rule, either in the reverse order of pickup or in the order of delivery. We interpret $\mathcal{I}$ as a sequence of bins and $\mathcal{J}$ as a sequence of pieces. A function $f:\mathcal{J}\to\mathcal{I}$ is called an Order-Preserving Mapping (OPM) if, for all consecutive pieces $j,j+1\in \mathcal{J}$, it satisfies: $f(j+1)\ge f(j)$. That is, $f$ preserves the relative order of pieces with respect to the bins.
Given an OPM $f$, a packing plan specifies the placement of each piece within the spatial region associated with its assigned bin. 
\end{definition}

An OPM partitions the sequence of pieces into ordered groups, each assigned to one reference bin. The monotonicity condition prevents a later piece in the sequence from being mapped to a bin that precedes the bin assigned to an earlier piece. Thus, the mapping captures the sequence consistency required by either NR-p or NR-d, while the within-bin placement checks geometric containment and local accessibility. Because these bin regions are inherited from a feasible reference packing and are non-overlapping, feasibility within each assigned bin is sufficient to rule out additional spatial conflicts across bins. Lemma~\ref{lemma:dom_d} establishes the resulting partial dominance relation when one of the two no-relocation conditions is considered.

\begin{lemma}[Sequence-aware partial dominance]
\label{lemma:dom_d}
Consider two SOPPs inducing two item sequences, $\mathcal{I}$ and $\mathcal{J}$, indexed according to the same rule. Suppose that a feasible packing plan exists for $\mathcal{I}$ on the associated NDPA, and that there exists an OPM $f:\mathcal{J}\to\mathcal{I}$ such that each piece in $\mathcal{J}$ can be feasibly placed within the region of its mapped reference bin in $\mathcal{I}$ while satisfying Constraints~(R1)--(R3), (P1), (P2), and (P3a) within each bin. 
Then:
(i) if both sequences are indexed in the reverse order of pickup, $\mathcal{J}$ admits a feasible packing plan satisfying NR-p; and 
(ii) if both sequences are indexed in the order of delivery, $\mathcal{J}$ admits a feasible packing plan satisfying NR-d.
\end{lemma}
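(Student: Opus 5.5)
The plan is to build the packing plan for $\mathcal{J}$ directly from the OPM and the reference plan, and then check the constraints in two layers: inside each bin, and across bins. Write $\mathcal{I}=(b_1,\ldots,b_m)$ and $\mathcal{J}=(1,\ldots,n)$, both indexed by the common rule. Let $R_k$ be the rectangular region that bin $b_k$ occupies in the verified feasible plan of $\mathcal{I}$ at its NDPA. By hypothesis, each piece $j$ has a placement $P_j\subseteq R_{f(j)}$ that satisfies (P1), (P2) and (P3a) within the bin. The candidate plan for $\mathcal{J}$ is the union of these placements, with each piece keeping the orientation it has in its bin.

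The first step is the geometric feasibility (P1)--(P2). Each $R_k$ lies inside the loading space, and distinct regions $R_k,R_{k'}$ are interior-disjoint because the reference plan satisfies (P1). Pieces assigned to the same bin do not overlap by the within-bin hypothesis. Pieces assigned to different bins lie in disjoint regions, so they cannot overlap either. Rotations are inherited, so (P2) holds. Constraints (R1)--(R3) concern the route only, and they are assumed for $\mathcal{J}$.

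The second step, which I expect to be the main obstacle, is to lift accessibility from within bins to the whole compartment. I would argue directly from the meaning of the index order. In case (ii), index order means delivery order, and NR-d requires that when piece $j$ is unloaded, its rearward sweep $S_j=\{(x,y):x\in \text{proj}_x(P_j),\,y\ge \max_y P_j\}$ meets no piece $j'>j$. This means no piece still onboard may lie in the sweep toward the rear door. Take any $j'>j$. By the OPM property, $f(j')\ge f(j)$.

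If $f(j')=f(j)$, the within-bin (P3a) clears the part of the sweep inside $R_{f(j)}$. I would interpret (P3a) within the bin as accessibility to the bin boundary in the rear direction, so the pieces that matter here are handled. If $f(j')>f(j)$, then $P_{j'}\subseteq R_{f(j')}$. Bin $f(j)$ is unloaded no later than bin $f(j')$ in the reference plan, so NR-d for $\mathcal{I}$ says the rearward sweep of $R_{f(j)}$ avoids $R_{f(j')}$. Since $P_j\subseteq R_{f(j)}$, the part of $S_j$ behind $R_{f(j)}$ is contained in the sweep of $R_{f(j)}$, which avoids $R_{f(j')}$. The part of $S_j$ inside $R_{f(j)}$ is disjoint from $R_{f(j')}$ anyway.

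Combining the two cases, $P_{j'}\cap S_j=\emptyset$. Pieces with $j'<j$ have already left the vehicle and impose no constraint. Case (i) is identical with reverse-pickup order replacing delivery order. When piece $j$ is loaded, all pieces loaded earlier have larger reverse-pickup index and have images $f(\cdot)\ge f(j)$. The same sweep argument then shows the insertion path is clear, which gives NR-p.

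The delicate point is the precise reading of "(P3a) within each bin". It must mean that a piece can travel from its position to the rear edge of its bin region without crossing later pieces in that bin. I would state this explicitly at the start of the proof. I would also note the tie case in the reference plan (equal order of distinct bins cannot occur, since the bins are distinct items with a strict order), so $f(j')>f(j)$ really gives strict precedence in $\mathcal{I}$ and the reference NR condition applies.
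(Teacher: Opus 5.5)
Your proposal is correct and follows essentially the same approach as the paper: the paper argues that the OPM keeps the precedence among pieces consistent with that among their bins and concludes that accessibility is ``inherited'' from the verified plan of $\mathcal{I}$, and your construction makes that argument explicit (union of in-bin placements, non-overlap from the disjoint bin regions, then a same-bin versus cross-bin case split using local (P3a) for the first and the reference NR condition together with $f(j')\ge f(j)$ for the second). Your stated reading of ``(P3a) within each bin'' as pairwise rear accessibility in the bin's local frame, matching the CP no-relocation constraint, is the interpretation the paper leaves implicit, and under it your sweep argument goes through.
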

\begin{proof}
Since a feasible packing plan exists for $\mathcal{I}$, the corresponding items satisfy the geometric feasibility conditions in Constraints~(R1)--(R3), (P1), and (P2), as well as the relevant accessibility requirement in Constraint~(P3a).\\
For part~(i), let both sequences be indexed in the reverse order of pickup. Because $f$ is order-preserving, the pickup precedence relation among pieces in $\mathcal{J}$ is consistent with that among their mapped bins in $\mathcal{I}$. Moreover, each piece is feasibly placed within its mapped bin while satisfying Constraints~(R1)--(R3), (P1), (P2), and (P3a) locally. Therefore, the accessibility relation required by NR-p is inherited from $\mathcal{I}$ to $\mathcal{J}$, and $\mathcal{J}$ admits a feasible packing plan satisfying NR-p.
Part~(ii) follows analogously when both sequences are indexed in the order of delivery. Following the same logic, $\mathcal{J}$ admits a feasible packing plan satisfying NR-d.
\end{proof}

Lemma~\ref{lemma:dom_d} provides a criterion for verifying whether the NR-p or NR-d condition holds for an SOPP through partial dominance. Proposition~\ref{prop:domi} strengthens the partial dominance result to obtain a dominance criterion that simultaneously enforces both no-relocation conditions.

\begin{proposition}[Sequence-aware dominance]
\label{prop:domi}
Consider two SOPPs with item sets $\mathcal{I}$ and $\mathcal{J}$, where $\mathcal{I}$ is associated with a verified feasible packing plan. Let $\mathcal{I}^\mathrm{p}$ and $\mathcal{J}^\mathrm{p}$ denote the corresponding item sequences indexed in the reverse order of pickup, and let $\mathcal{I}^\mathrm{d}$ and $\mathcal{J}^\mathrm{d}$ denote those indexed in the order of delivery.
If there exist two OPMs, $f^\mathrm{p}:\mathcal{J}^\mathrm{p}\to\mathcal{I}^\mathrm{p}$ and $f^\mathrm{d}:\mathcal{J}^\mathrm{d}\to\mathcal{I}^\mathrm{d}$, such that:\\
(i) $f^\mathrm{p}$ and $f^\mathrm{d}$ satisfy the sequence-aware partial dominance condition in Lemma~\ref{lemma:dom_d} for the NR-p and NR-d conditions, respectively; and\\
(ii) the two mappings induce a common packing plan for $\mathcal{J}$, i.e., each item in $\mathcal{J}$ has the same position and orientation under the two induced packing realizations,\\
then $\mathcal{J}$ is dominated by $\mathcal{I}$ and admits a feasible packing plan satisfying both the NR-p and NR-d simultaneously.
\end{proposition}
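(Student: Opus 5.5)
The plan is to build a single packing plan for $\mathcal{J}$ from condition~(ii) and then check each packing constraint on it, using Lemma~\ref{lemma:dom_d} once for each no-relocation condition.

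\textbf{Step 1: fix the plan.} Let $\pi^\mathrm{p}$ and $\pi^\mathrm{d}$ be the packing realizations for $\mathcal{J}$ induced by $f^\mathrm{p}$ and $f^\mathrm{d}$. By condition~(i), Lemma~\ref{lemma:dom_d}(i) gives that $\pi^\mathrm{p}$ is a feasible plan satisfying NR-p. Lemma~\ref{lemma:dom_d}(ii) gives that $\pi^\mathrm{d}$ is a feasible plan satisfying NR-d. By condition~(ii), every item $q\in\mathcal{J}$ has the same position and orientation in both realizations, so $\pi^\mathrm{p}=\pi^\mathrm{d}=:\pi$. This gives one well-defined assignment of coordinates and orientations to the items of $\mathcal{J}$.

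\textbf{Step 2: geometric constraints.} Constraints~(P1) and~(P2) hold for $\pi$ because they already hold for $\pi^\mathrm{p}$, which is feasible. The reason is the bin--piece argument stated after Definition~\ref{def:opm}. Pieces mapped to the same bin are placed feasibly inside that bin. The bins themselves are inherited from the verified plan for $\mathcal{I}$, so they lie inside the loading space and do not overlap each other. Hence no two pieces overlap and every piece lies within $W\times L$. The geometric part does not depend on which mapping we read $\pi$ from.

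\textbf{Step 3: accessibility.} I would split (P3a) into its two parts. The pickup-side requirement concerns loading each item along the reverse pickup order without moving items already onboard. It is a property of the coordinates in $\pi$ together with the pickup sequence $\mathcal{J}^\mathrm{p}$, and it holds because $\pi=\pi^\mathrm{p}$ satisfies NR-p. The delivery-side requirement concerns unloading in delivery order, and it holds because $\pi=\pi^\mathrm{d}$ satisfies NR-d. Together, $\pi$ satisfies (P3a) for every loading and unloading operation represented by the SOPP of $\mathcal{J}$. Condition~(P3b) is automatic inside a single SOPP, since each item is given one fixed placement for its whole interval in $\pi$. Hence $\mathcal{J}$ has a feasible plan at its NDPA, which is the dominance claim. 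In the sense of Proposition~\ref{prop:route_feasibility}(i), this is exactly what Algorithm~\ref{alg:feasibility_check} needs to skip the exact check.

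\textbf{Main obstacle.} The delicate point is the claim that NR-p and NR-d are each determined by the final coordinates together with the respective sequence. They must not depend on the mapping that produced the coordinates. Otherwise the fact that $\pi^\mathrm{d}$ satisfies NR-d says nothing about $\pi^\mathrm{p}$. I would settle this first by writing each condition explicitly as a predicate on placements, following the CP formulation in Appendix~\ref{sec:app_exa}. For an item $q$ and any item $q'$ that is onboard when $q$ is handled, the predicate says that $q'$ does not block the rear-access corridor of $q$, through a condition on the $y$-coordinates and the $x$-intervals. With the conditions in this form, the two inheritances from Lemma~\ref{lemma:dom_d} combine directly once condition~(ii) identifies the two realizations.
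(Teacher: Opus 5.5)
Your proposal is correct and follows essentially the same argument as the paper. You obtain $\pi^\mathrm{p}$ and $\pi^\mathrm{d}$ from Lemma~\ref{lemma:dom_d}(i) and (ii), use condition~(ii) to identify them as a single realization, and read off NR-p, NR-d and (P1)--(P2) from that common plan. The point you flag as the main obstacle is harmless and is exactly what the paper relies on implicitly when it says $\pi^*$ satisfies NR-p because it equals $\pi^\mathrm{p}$: NR-p and NR-d are predicates on the final coordinates plus the respective sequence, as in constraint~\eqref{eq:cp_nr}, not on the mapping that produced them.
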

\begin{proof}
By condition~(i) and Lemma~\ref{lemma:dom_d}(i), the mapping $f^\mathrm{p}$ induces a feasible packing realization $\pi^\mathrm{p}$ for $\mathcal{J}$ satisfying NR-p. By Lemma~\ref{lemma:dom_d}(ii), the mapping $f^\mathrm{d}$ induces a feasible packing realization $\pi^\mathrm{d}$ for $\mathcal{J}$ satisfying NR-d. In general, $\pi^\mathrm{p}$ and $\pi^\mathrm{d}$ need not coincide.\\
By condition~(ii), however, $f^\mathrm{p}$ and $f^\mathrm{d}$ assign to each item $j \in \mathcal{J}$ the same position and orientation, so $\pi^\mathrm{p} = \pi^\mathrm{d}$; call this common realization $\pi^*$. Since $\pi^*$ equals $\pi^\mathrm{p}$, it satisfies NR-p; since it equals $\pi^\mathrm{d}$, it satisfies NR-d. Furthermore, $\pi^*$ satisfies Constraints~(R1)--(R3) and (P1)--(P2) because both $\pi^\mathrm{p}$ and $\pi^\mathrm{d}$ do by Lemma~\ref{lemma:dom_d}. Hence $\pi^*$ is a single feasible packing plan for $\mathcal{J}$ satisfying all required constraints, and $\mathcal{J}$ is dominated by $\mathcal{I}$.
\end{proof}

\autoref{fig:dom_pd} illustrates the distinction between partial and full dominance. SOPP1 provides a verified feasible packing plan with three reference bins. For SOPP2, realization $\pi_1$ satisfies only NR-d; items 4 and 5, which are picked up later than item 3, are not accessible for loading. Realization $\pi_2$ represents the common plan $\pi^*$ that simultaneously satisfies both NR-p and NR-d, as required by condition~(ii) of Proposition~\ref{prop:domi}.
\begin{figure}[H]
    \centering
    \includegraphics[width=0.75\linewidth]{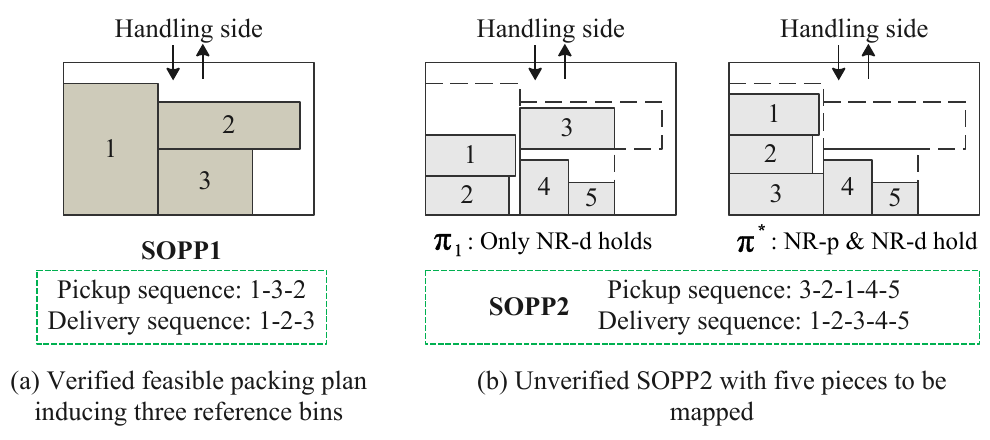}
    \caption{Illustration of partial dominance under the NR-d condition and its extension to full dominance.}
    \label{fig:dom_pd}
\end{figure}

\subsubsection{OPM Search Tree and Pruning Rules}\label{sec:opm_tree}
Dominance screening requires identifying an OPM that is feasible under both NR-p and NR-d. However, the number of possible mappings grows combinatorially with the numbers of bins and pieces. In particular, given $|\mathcal{I}|$ bins and $|\mathcal{J}|$ pieces, the number of possible mapping functions is $\binom{|\mathcal{I}|+|\mathcal{J}|-1}{|\mathcal{J}|}$.
Exhaustive enumeration therefore becomes prohibitive even for moderate instance sizes. To address this, we adopt an incremental tree search in which OPMs are constructed piece by piece, and infeasible partial mappings are pruned as early as possible.

The tree search is built on the notion of a partial OPM. Definition~\ref{def:partial_opm} formalizes partial mappings that preserve the monotonicity requirement. Definition~\ref{def:opm_tree} then specifies the resulting search-tree structure induced by incremental assignments.

\begin{definition}[Partial OPM]
\label{def:partial_opm}
Given ordered sequences of pieces $\mathcal{J}$ and bins $\mathcal{I}$, a partial OPM is an OPM defined on a subset of pieces $\mathcal{J}'\subseteq\mathcal{J}$. That is, it is a function $f:\mathcal{J}'\to\mathcal{I}$ such that, for any $j_1,j_2\in\mathcal{J}'$ with $j_1 < j_2$, we have $f(j_1)\le f(j_2)$. It represents a partially constructed mapping in which only the pieces in $\mathcal{J}'$ have been assigned to bins.
\end{definition}

\begin{definition}[OPM search tree]
\label{def:opm_tree}
Given ordered sequences $\mathcal{J}=(j_1,\dots,j_{|\mathcal{J}|})$ and $\mathcal{I}=(i_1,\dots,i_{|\mathcal{I}|})$, the OPM search tree is a rooted tree of depth $|\mathcal{J}|$. Each node at depth $\rho$ represents a partial OPM $f_\rho:\{j_1,\dots,j_\rho\}\to\mathcal{I}$, where the root corresponds to $\rho=0$ and represents the empty mapping. 
At depth $\rho=1$, each node is obtained by assigning $j_1$ to a bin in $\mathcal{I}$. For $\rho \geq 1$, each node is obtained by extending a node at depth $\rho-1$ through assigning $j_\rho$ to a bin index not smaller than that of $j_{\rho-1}$, i.e., $f_\rho(j_\rho)\geq f_{\rho-1}(j_{\rho-1})$.
\end{definition}

To support pruning, it is useful to distinguish bins whose assigned piece sets are fixed from those that may still receive additional pieces. For a node at depth $\rho$, let $\mathcal{I}_\rho=\{\,f_\rho(j_k):1\le k\le \rho\,\}$ denote the set of bins that have received at least one piece. Definition~\ref{def:open_bins} classifies these bins into open and closed bins.

\begin{definition}[Open and closed bins]
\label{def:open_bins}
At depth $\rho\geq 1$, the bin $f_\rho(j_\rho)$ is called the \emph{open bin}. It remains available for future mappings, since subsequent pieces at deeper levels may also be assigned to it. Any bin in $\mathcal{I}_\rho\setminus\{f_\rho(j_\rho)\}$ is called a \emph{closed bin}. Its packing state is fixed and it will not receive any additional pieces in subsequent extensions.
\end{definition}

Definition~\ref{def:open_bins} implies that extending a node can only add pieces to the current open bin or to new empty bins with larger indices; the contents of closed bins are inherited unchanged. Since the piece set of a closed bin is fully determined at the moment it becomes closed, its packing feasibility can be verified immediately at that tree node. If a closed bin is found infeasible, all descendants of that node can be pruned, because subsequent extensions cannot modify its assigned piece set and therefore cannot restore its feasibility. Proposition~\ref{prop:prune} formalizes this pruning condition.

\begin{proposition}[Pruning by closed-bin infeasibility]
\label{prop:prune}
Consider a tree node at depth $\rho$ with partial mapping $f_\rho$. If any closed bin of $f_\rho$ is infeasible under the packing constraints, then no complete OPM extending $f_\rho$ is feasible, and the entire subtree rooted at this node can be pruned.
\end{proposition}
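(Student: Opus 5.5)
The plan is to argue by contradiction, using the invariant that extensions in the OPM search tree never change the piece set of a closed bin. Fix a node at depth $\rho$ with partial mapping $f_\rho$, and let $i^\ast\in\mathcal{I}_\rho\setminus\{f_\rho(j_\rho)\}$ be a closed bin found infeasible. Write $S_\rho(i^\ast)=\{j_k: 1\le k\le\rho,\ f_\rho(j_k)=i^\ast\}$ for its assigned piece set. Suppose, for contradiction, that some complete OPM $f:\mathcal{J}\to\mathcal{I}$ extending $f_\rho$ (that is, $f|_{\{j_1,\dots,j_\rho\}}=f_\rho$) is feasible. Here feasible means that each piece can be placed within its mapped bin subject to (P1), (P2) and (P3a), as required in Lemma~\ref{lemma:dom_d}.

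The first step is to show that $f^{-1}(i^\ast)=S_\rho(i^\ast)$. Since $i^\ast$ is closed and lies in $\mathcal{I}_\rho$, it is assigned to some $j_k$ with $k<\rho$. It is also distinct from $f_\rho(j_\rho)$, and monotonicity gives $f_\rho(j_k)\le f_\rho(j_\rho)$, so $i^\ast<f_\rho(j_\rho)$. For any later piece $j_{\rho'}$ with $\rho'>\rho$, Definition~\ref{def:opm_tree} (equivalently, the monotonicity in Definition~\ref{def:partial_opm}) gives $f(j_{\rho'})\ge f(j_\rho)=f_\rho(j_\rho)>i^\ast$. Hence no piece beyond depth $\rho$ is mapped to $i^\ast$, and the earlier pieces are mapped exactly as under $f_\rho$. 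This gives the claimed equality.

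The second step derives the contradiction. Feasibility of $f$ requires that the pieces in $f^{-1}(i^\ast)$ be feasibly placed within the region of $i^\ast$. By the first step, this is exactly the within-bin packing problem for $S_\rho(i^\ast)$, which was declared infeasible. Within-bin feasibility depends only on the bin region and the ordered set of pieces assigned to it, so the problem is identical in both cases. This contradicts the assumption, so no complete extension of $f_\rho$ is feasible. The nodes of the subtree rooted at this node are exactly the partial OPMs extending $f_\rho$, so every leaf in the subtree is infeasible and the subtree can be pruned.

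The main obstacle is the claim that within-bin feasibility is \emph{local}: whether pieces fit in $i^\ast$ under (P1), (P2) and (P3a) must not depend on assignments to other bins. I would justify this with the paper's remark after Definition~\ref{def:opm}. Bin regions are inherited from a feasible, non-overlapping reference packing, so cross-bin spatial conflicts cannot occur and each bin's check involves only its own pieces. The relative order of those pieces is also fixed by the indexing of $\mathcal{J}$. The statement holds for either indexing rule, NR-p or NR-d, and therefore also for the combined search.
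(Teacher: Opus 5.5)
Your proof is correct and follows essentially the same route as the paper: the piece set of a closed bin is unchanged in every extension of $f_\rho$, so its infeasibility persists and no complete OPM extending $f_\rho$ can be feasible. The only difference is that you derive the no-new-pieces property from monotonicity ($f(j_{\rho'})\ge f_\rho(j_\rho)>i^\ast$ for $\rho'>\rho$) and state the locality of the within-bin check explicitly, whereas the paper simply cites Definition~\ref{def:open_bins} for the first point and takes the second as given.
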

\begin{proof}
Let $i^* \in \mathcal{I}_\rho \setminus \{f_\rho(j_\rho)\}$ be a closed bin that is infeasible under the packing constraints. By Definition~\ref{def:open_bins}, $i^*$ receives no additional pieces in any extension $f_{\rho'}$ of $f_\rho$ at depth $\rho' > \rho$, so the piece set of $i^*$ under $f_{\rho'}$ is identical to that under $f_\rho$. Hence $i^*$ remains infeasible for every such extension. Since a feasible complete OPM requires every bin to admit a feasible packing, no completion of $f_\rho$ is feasible, and the subtree can be pruned.
\end{proof}

\subsubsection{Search Initialization and Tree-Exploration Strategy}\label{sec:tree_cut}

Building on the OPM search tree in Section~\ref{sec:opm_tree}, the remaining task is to specialize the search to the dominance condition in Proposition~\ref{prop:domi}. Complete dominance requires compatibility between the pickup-sorted and delivery-sorted mappings together with a consistent in-bin packing plan. In addition, when the SOPP under verification shares items with preceding SOPPs on the same route, Proposition~\ref{prop:route_feasibility} requires that the placements of these shared items remain unchanged, which is exactly the role of Constraint~(P3b). Therefore, the OPM search must account for both cross-order compatibility and inherited placements. Since the goal is to verify the existence of at least one feasible realization rather than to optimize over all OPMs, computational efficiency relies on restricting the search space as early as possible and exploring promising branches first. To this end, three search rules are introduced: a monotonicity pruning rule, a consistency pruning rule, and a sibling-node dominance rule.

\paragraph{(1) Monotonicity pruning rule.}
The OPM search tree is constructed under one specified order, either the pickup-sorted order or the delivery-sorted order. The monotonicity condition associated with that order is enforced directly by the tree structure in Definition~\ref{def:opm_tree}. However, complete dominance in Proposition~\ref{prop:domi} requires compatibility with the other order as well. Therefore, whenever a node is extended under the current order, its partial mapping must also be checked against the monotonicity requirement induced by the other order. If this additional requirement is violated, then the corresponding branch cannot lead to a mapping that is monotone under both orders and can be discarded immediately.

\begin{proposition}[Monotonicity pruning]
\label{prop:mono_cut}
Consider an OPM search tree constructed under one specified order, either pickup-sorted or delivery-sorted. If a partial OPM at any node violates the monotonicity requirement induced by the other order, then no extension of this partial OPM can satisfy both orders, and the corresponding branch can be pruned.
\end{proposition}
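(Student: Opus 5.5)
The argument rests on one fact: the monotonicity requirement is a universally quantified condition over pairs of pieces. Once a pair of already-assigned pieces violates it, no extension can repair the violation, because extending a partial mapping never changes values already assigned.

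First I will make the "other-order" requirement precise. Suppose the tree is built under the pickup-sorted order, so the tree structure of Definition~\ref{def:opm_tree} enforces monotonicity with respect to $\mathcal{J}^\mathrm{p}$ and $\mathcal{I}^\mathrm{p}$. The other requirement asks that the induced assignment of items of $\mathcal{J}$ to items (bins) of $\mathcal{I}$ be order-preserving when both sequences are re-indexed by delivery. In other words, for every pair $j_1,j_2$ of assigned pieces with $j_1$ preceding $j_2$ in $\mathcal{J}^\mathrm{d}$, the bin $f(j_1)$ must not come after $f(j_2)$ in $\mathcal{I}^\mathrm{d}$. This is exactly the pairwise form in Definition~\ref{def:partial_opm}, applied to the delivery indexing. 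The delivery-sorted case is symmetric, with the roles of the two orders swapped.

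Next I will formalize the compatibility required by Proposition~\ref{prop:domi}. Condition~(ii) asks that $f^\mathrm{p}$ and $f^\mathrm{d}$ induce a common packing plan. I will take this to mean that a single assignment of pieces to reference bins must be an OPM under both indexings, with $f^\mathrm{p}=f^\mathrm{d}$ as maps on items. The justification is that each piece is placed inside the region of its mapped bin, so a common realization places each piece in one bin under both views. This identification is the step I expect to be the main obstacle. If one wanted to allow a piece to sit in the overlap of two different bins' regions, that would be impossible anyway, because bin regions inherited from a feasible reference plan are non-overlapping. I will therefore argue that, for pieces with positive area, a common realization forces the same bin assignment under both orders.

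With this in place, the main step is short. Let $f_\rho$ be a partial OPM at depth $\rho$ that violates the other-order monotonicity. Then there exist assigned pieces $j_1,j_2\in\{j_1,\dots,j_\rho\}$ witnessing a pair-inversion. Any extension $f_{\rho'}$ with $\rho'>\rho$ agrees with $f_\rho$ on these pieces, by the construction in Definition~\ref{def:opm_tree}. So the same pair witnesses a violation in every complete extension. Hence no complete OPM in the subtree is monotone under both orders, condition~(ii) of Proposition~\ref{prop:domi} cannot be met, and the subtree can be pruned. The structure of this argument parallels the proof of Proposition~\ref{prop:prune}, where fixed closed-bin contents played the role that the fixed assigned values play here.
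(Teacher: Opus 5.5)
Your proposal is correct and follows essentially the same argument as the paper: an inverted pair $j_s, j_t$ among the already-assigned pieces keeps its bin values under every extension, so no completion can be monotone under the other order. Your extra step, which identifies $f^\mathrm{p}$ with $f^\mathrm{d}$ using the non-overlapping bin regions, makes explicit an assumption the paper leaves implicit: that both orders are checked on a single mapping. It does not change the route of the proof.
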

\begin{proof}
Let $f_\rho$ be a partial OPM at depth $\rho$ that violates the monotonicity requirement induced by the other order. Then there exist two pieces $j_s, j_t \in \mathcal{J}'$ with $s < t$ under the other order such that $f_\rho(j_s) > f_\rho(j_t)$. Any extension $f_{\rho'}$ of $f_\rho$ at depth $\rho' > \rho$ satisfies $f_{\rho'}(j_s) = f_\rho(j_s)$ and $f_{\rho'}(j_t) = f_\rho(j_t)$ by definition of extension, so $f_{\rho'}(j_s) > f_{\rho'}(j_t)$ holds for every such extension, regardless of how remaining pieces are assigned. Hence no completion of $f_\rho$ is monotone under the other order, and the branch can be safely pruned.
\end{proof}

The monotonicity pruning rule depends only on the two item orders and can therefore be evaluated before invoking geometric feasibility checks. In this sense, it serves as a structural filter that restricts the search to mappings that remain compatible with both orderings.

\paragraph{(2) Consistency pruning rule.}
The second restriction comes from route-level feasibility. By Proposition~\ref{prop:route_feasibility}, a route is packing-feasible only if all of its SOPPs are feasible and the placements of items shared across consecutive SOPPs remain unchanged. Thus, when the SOPP under verification has predecessors on the same route, the OPM search must exclude any branch for which no packing realization can preserve the inherited placements of these shared items. This is the role of the consistency pruning rule.

Let $\mathcal{J}^{\mathrm{fix}}\subseteq\mathcal{J}$ denote the set of shared pieces whose placements are inherited from preceding SOPPs and therefore fixed. For each $j\in\mathcal{J}^{\mathrm{fix}}$, let $\phi(j)\in\mathcal{I}$ denote the bin in the verified SOPP that contains the inherited placement of piece $j$. Then any feasible realization of complete dominance must map $j$ to $\phi(j)$, where the coordinates are fixed.

\begin{proposition}[Consistency pruning]
\label{prop:cons_cut}
Consider a node in the OPM search tree with partial mapping $f_\rho$ and let $\mathcal{J}^{\mathrm{assigned}}_\rho \subseteq \mathcal{J}$ denote the set of pieces assigned by $f_\rho$. If there exists a piece $j \in \mathcal{J}^{\mathrm{fix}} \cap \mathcal{J}^{\mathrm{assigned}}_\rho$ such that $f_\rho(j) \neq \phi(j)$, then the node is infeasible and can be pruned.
\end{proposition}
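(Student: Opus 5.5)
The argument mirrors the proofs of Proposition~\ref{prop:prune} and Proposition~\ref{prop:mono_cut}. We show that the offending assignment is frozen along every extension, and that any complete dominance realization must respect the inherited placements. The plan has three steps, carried out in order.

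\emph{First, dominance forces the inherited placement.} Suppose a complete OPM $f$ yields a realization $\pi^*$ in the sense of Proposition~\ref{prop:domi}, and suppose that realization is acceptable at route level. By Proposition~\ref{prop:route_feasibility}(ii), which rests on Constraint~(P3b), every shared piece $j\in\mathcal{J}^{\mathrm{fix}}$ must keep the position and orientation inherited from the preceding SOPP. In the verified reference plan, that inherited placement lies in the region of bin $\phi(j)$. The bin regions come from a feasible reference packing and are pairwise non-overlapping, so the placement cannot lie within the region of any other bin $i\neq\phi(j)$. Because each piece in a dominance realization is placed inside the region of its mapped bin, we conclude that $f(j)=\phi(j)$.

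\emph{Second, the offending assignment persists.} Let $\rho'>\rho$ and let $f_{\rho'}$ be any extension of $f_\rho$. By definition of extension, $f_{\rho'}(j)=f_\rho(j)$ for every assigned piece $j$. In particular, $f_{\rho'}(j)\neq\phi(j)$ for every such extension and for every complete OPM in the subtree.

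\emph{Third, conclude.} By the first step, no complete OPM in the subtree can produce a route-consistent realization. The node therefore cannot lead to a valid dominance certificate, and its subtree can be pruned.

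The main obstacle is the first step. It needs the inherited placement to be attributable to a \emph{unique} bin; equivalently, a piece assigned to bin $i$ must be placed within the region of $i$ only. If an inherited placement straddled several bin regions, $\phi(j)$ would not be well defined, and the argument would have to restrict pruning to pieces whose inherited placement lies wholly inside one reference bin. We would state that assumption explicitly, as implied by the definition of $\phi$, and rely on the non-overlap of bin regions to exclude every alternative bin.
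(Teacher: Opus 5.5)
Your proposal is correct and follows essentially the same argument as the paper. Route-level consistency (Proposition~\ref{prop:route_feasibility}, i.e., Constraint~(P3b)) forces every valid complete OPM to map $j$ to $\phi(j)$, and because extensions never change an existing assignment, every completion of $f_\rho$ violates this requirement. Your first step only spells out, using the non-overlap of the reference bin regions and the well-definedness of $\phi$, what the paper takes directly from the definition of $\phi$.
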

\begin{proof}
Let $j \in \mathcal{J}^{\mathrm{fix}} \cap \mathcal{J}^{\mathrm{assigned}}_\rho$ with $f_\rho(j) \neq \phi(j)$. By Proposition~\ref{prop:route_feasibility}, any feasible complete OPM must assign $j$ to $\phi(j)$ with its inherited coordinates. Since any extension $f_{\rho'}$ of $f_\rho$ satisfies $f_{\rho'}(j) = f_\rho(j) \neq \phi(j)$ by definition of extension, no completion of $f_\rho$ can satisfy this requirement. Hence the node is infeasible and can be pruned.
\end{proof}

The consistency pruning rule therefore introduces Constraint~(P3b) from Proposition~\ref{prop:route_feasibility} directly into the OPM search. Together, Propositions~\ref{prop:mono_cut} and~\ref{prop:cons_cut} restrict the candidate mappings to those that are compatible with both orderings and with inherited placements.

\paragraph{(3) Sibling-node dominance rule.}
After applying the two pruning rules above, the remaining search still depends on the order in which admissible branches are explored. Since the procedure terminates as soon as a feasible OPM is found, search order affects efficiency but not the feasibility outcome. Recall that the pieces and bins are indexed according to the ordered sequences used to construct the OPM search tree, either pickup-sorted or delivery-sorted; hence, $i<i'$ means that bin $i$ precedes bin $i'$ in the ordered bin sequence. The following rule justifies exploring sibling nodes in increasing order of their bin positions in the ordered sequence $\mathcal{I}$.

\begin{proposition}[Sibling-node dominance]
\label{prop:sibling}
Consider two sibling nodes at depth $\rho$ whose partial mappings $f_\rho$ and $f_\rho'$ differ only in the assignment of piece $j_\rho$, with $f_\rho(j_\rho) = i$ and $f_\rho'(j_\rho) = i'$ for $i < i'$. Suppose that $f_\rho(j_k) = f_\rho'(j_k)$ for all previously assigned pieces $j_k$, $k < \rho$. Consider the two child nodes $f_{\rho+1}$ and $f_{\rho+1}'$ obtained by extending $f_\rho$ and $f_\rho'$, respectively, by assigning piece $j_{\rho+1}$ to bin $i'$, with all other assignments unchanged. If $f_{\rho+1}$ is infeasible at bin $i'$, then $f_{\rho+1}'$ is also infeasible.
\end{proposition}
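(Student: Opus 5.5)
The plan is to compare the piece sets that the two child nodes assign to bin $i'$, and to show that the set under $f_{\rho+1}'$ contains the set under $f_{\rho+1}$. Since feasibility within a bin is monotone under removing pieces, the infeasibility claim then follows by contraposition.

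\textbf{Step 1: the contents of bin $i'$ under each child.} In $f_{\rho+1}$, piece $j_\rho$ sits in bin $i<i'$. By Definition~\ref{def:open_bins}, bin $i$ is therefore closed once $j_{\rho+1}$ is placed in $i'$. Bin $i'$ then contains only the previously assigned pieces $j_k$ ($k<\rho$) with $f_\rho(j_k)=i'$, together with $j_{\rho+1}$. Monotonicity (Definition~\ref{def:opm_tree}) together with $f_\rho(j_{\rho-1})\le i<i'$ shows that there are no such earlier pieces. Hence bin $i'$ holds exactly $S=\{j_{\rho+1}\}$.

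In $f_{\rho+1}'$, bin $i'$ holds $S'=\{j_\rho,j_{\rho+1}\}$, plus any earlier pieces mapped to $i'$. The latter set is also empty by the same monotonicity argument, because the earlier assignments coincide. So $S\subseteq S'$.

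\textbf{Step 2: feasibility is hereditary under taking subsets.} Suppose $S'$ admits a placement within the region of bin $i'$ satisfying (P1), (P2) and the local accessibility (P3a). Restrict that placement to $S$. Containment and non-overlap are preserved. Accessibility of each remaining piece along the rear (handling) side is also preserved, because deleting a piece only removes potential blockers and never adds any. Any fixed inherited placement (Proposition~\ref{prop:cons_cut}) for $j_{\rho+1}$ is kept unchanged in the restriction.

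Thus $S'$ feasible implies $S$ feasible. Contrapositively, if $f_{\rho+1}$ is infeasible at bin $i'$, so is $f_{\rho+1}'$. This is the same restriction argument used in the proof of Proposition~\ref{prop:route_feasibility}.

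\textbf{Main obstacle.} The delicate point is the hereditary property for (P3a) within a bin. The ordering constraints (NR-p/NR-d) are defined relative to the sequence, so I must check that removing $j_\rho$ does not turn an accessible configuration into an inaccessible one. I would argue this directly: accessibility of a piece requires that no other onboard piece obstructs its path to $y=L$ at its operation time. The set of obstructing candidates under $S$ is a subset of that under $S'$, so no new obstruction can appear.

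A second subtlety is whether ``infeasible at bin $i'$'' is evaluated with $i'$ open rather than closed. I would state that the feasibility test of a bin is the test of its current piece set, so the monotone-in-subsets argument applies regardless of open or closed status.
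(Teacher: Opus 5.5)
Your proposal is correct and matches the paper's argument: the bin-$i'$ piece set under $f_{\rho+1}'$ is the one under $f_{\rho+1}$ plus $j_\rho$, and any feasible in-bin packing, including the (P3a) precedence constraints and fixed placements, restricts to a feasible packing of a subset. You add two things the paper does not spell out. First, monotonicity gives $f_\rho(j_k)\le i<i'$ for all $k<\rho$, so the two sets are exactly $\{j_{\rho+1}\}$ and $\{j_\rho,j_{\rho+1}\}$. Second, you check explicitly that (P3a) is preserved under removing pieces, whereas the paper only asserts it.
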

\begin{proof}
The piece set of bin $i'$ under $f_{\rho+1}$ consists of piece $j_{\rho+1}$ together with all pieces assigned to $i'$ by the common assignments of previously assigned pieces $\{f_\rho(j_k) : k < \rho\}$, but excludes piece $j_\rho$ since $f_\rho(j_\rho) = i \neq i'$. The piece set of bin $i'$ under $f_{\rho+1}'$ contains the same pieces plus $j_\rho$, since $f_\rho'(j_\rho) = i'$. 
Hence the piece set of bin $i'$ under $f_{\rho+1}'$ is a strict superset of the piece set under $f_{\rho+1}$, namely, it contains all pieces in the latter set plus $j_\rho$.
Since feasibility of a bin requires a valid packing of its piece set, and any feasible packing of a strict superset would also yield a feasible packing of the subset, infeasibility of bin $i'$ under $f_{\rho+1}$ implies infeasibility of bin $i'$ under $f_{\rho+1}'$. Hence $f_{\rho+1}'$ is infeasible and can be pruned.
\end{proof}
\begin{remark}
\label{remark:sibling}
Proposition~\ref{prop:sibling} motivates exploring sibling nodes in increasing order of their bin positions in $\mathcal{I}$, i.e., sibling nodes mapping $j_\rho$ to earlier bins in $\mathcal{I}$ are explored first. Once a node $f_{\rho+1}$ is found infeasible at bin $i'$, the corresponding node $f_{\rho+1}'$ generated from the sibling parent that maps $j_\rho$ to a later bin $i' $ can be pruned without evaluation.
\end{remark}

Figure~\ref{fig:opm_tree} illustrates the search-tree structure and the pruning effects induced by Propositions~\ref{prop:mono_cut}--\ref{prop:cons_cut}. Tree node~4 is pruned since piece $j_2$ is mapped to a bin with a smaller index than that of $j_1$, violating the monotonicity requirement induced by the delivery order. Tree nodes~7--9 are pruned since piece $j_1$ is a shared piece whose inherited placement fixes its bin assignment to $\phi(j_1)$, yet the partial mapping assigns it to bin $i_2$. 
For the sibling-node dominance rule, node~4 and node~6 correspond to two child nodes that assign the later piece $j_3$ to the same bin $i_2$, while their parent mappings differ in the assignment of the preceding piece $j_2$. If node~4 is infeasible at bin $i_2$, then node~6 can be pruned without evaluation because its bin $i_2$ contains a strict superset of the pieces assigned in node~4.
\begin{figure}[ht]
    \centering
    \includegraphics[width=0.9\linewidth]{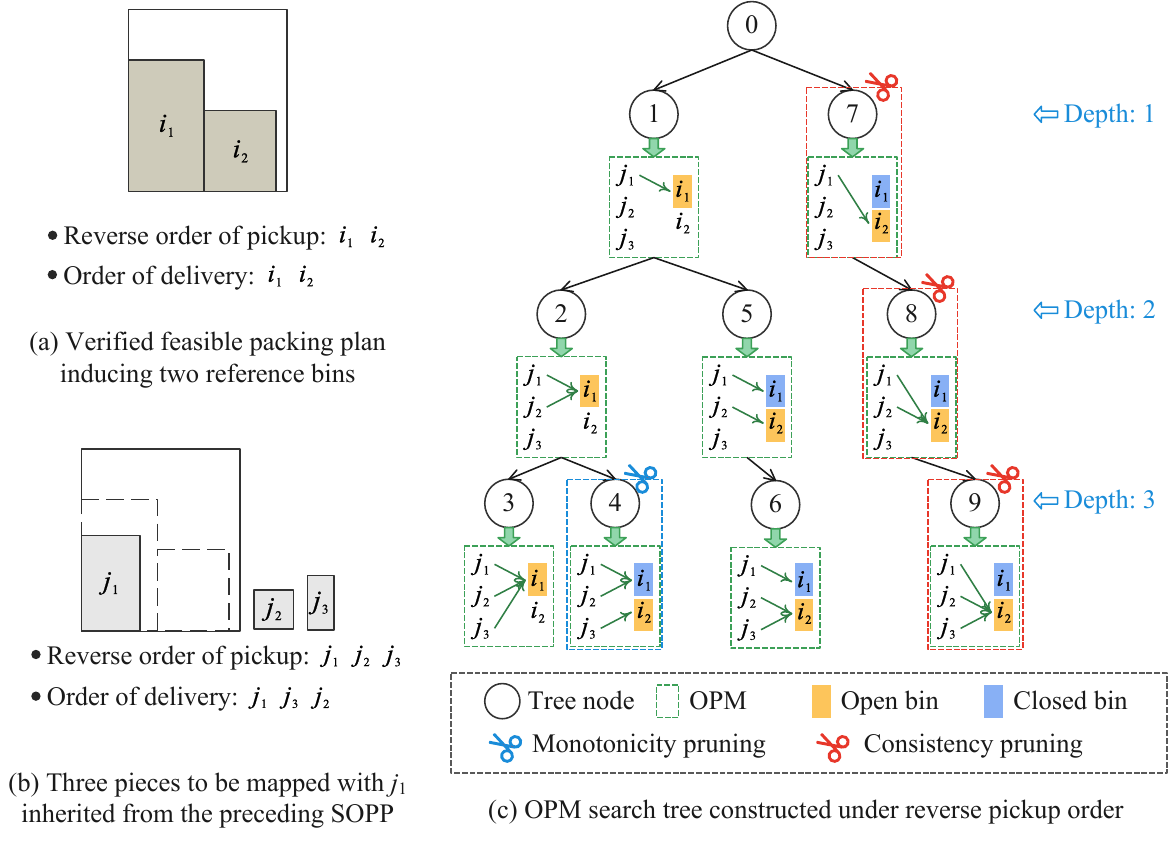}
    \caption{Illustration of the OPM search tree and the associated pruning rules. Panel (a) shows a verified feasible packing plan that induces the reference bins. Panel (b) shows the pieces to be mapped, including one inherited piece whose placement must be preserved from the preceding SOPP. Panel (c) presents the resulting OPM search tree with the resulting pruning outcomes.}
    \label{fig:opm_tree}
\end{figure}

In implementation, pruning rules~\ref{prop:mono_cut} and \ref{prop:cons_cut} can be applied before node expansion by precomputing the admissible bin set for each piece as the intersection of bins allowed by cross-order monotonicity and inherited-placement consistency.

\section{Acceleration Strategies for Dominance Checking}\label{sec:strategies}

As established in Section~\ref{sec:alg_ver_sopp}, packing feasibility is verified at the SOPP level. Dominance screening therefore compares an unverified SOPP with previously verified feasible SOPPs stored in a repository. Although this reuse mechanism can avoid repeated packing verification, an exhaustive comparison with all stored SOPPs may become costly as the repository grows. This section introduces three acceleration strategies: descriptor-based retrieval to identify plausible candidates (Section~\ref{sec:strategies_descriptor}), a candidate cap to limit screening effort (Section~\ref{sec:strategies_threshold}), and hot-store sampling to prioritize recently stored SOPPs (Section~\ref{sec:strategies_hot_store}). Together, these strategies allow dominance screening to serve as a lightweight inference step before computationally demanding exact packing verification is invoked.

\subsection{Descriptor-Based Candidate Retrieval}
\label{sec:strategies_descriptor}

The first acceleration strategy uses aggregate packing descriptors to retrieve a smaller set of candidate reference SOPPs before performing detailed dominance checking. Each verified SOPP is associated with a set of onboard items, and three inexpensive descriptors are computed from this item set: the total item area, the maximum item side length, and the maximum single-item area. 
These descriptors provide necessary conditions for dominance: if any descriptor value of a verified SOPP is smaller than the corresponding value of the unverified SOPP, the verified SOPP cannot dominate the unverified SOPP, because its packing pattern cannot contain all items in the unverified SOPP.
Hence, descriptor-based retrieval first filters the repository and retains only verified SOPPs that are compatible with the unverified SOPP according to these aggregate measures. Detailed dominance checking is then applied only to the retrieved candidates. This strategy does not alter the dominance criterion itself; it only reduces the number of reference SOPPs for which the more expensive order-preserving mapping test is attempted. The precise indexing and retrieval procedure is provided in Electronic Companion.

\subsection{Storage and Candidate Retrieval Thresholds}
\label{sec:strategies_threshold}
Beyond descriptor-based retrieval, two threshold parameters are introduced to control the size of the verified-SOPP repository and the number of retrieved candidates examined in each dominance trial.

To limit the worst-case screening effort for each unverified SOPP, we introduce a candidate cap parameter $\mu^\text{candi}$, which specifies the maximum number of retrieved candidates subjected to detailed dominance checking. Let $\mathcal{C}(\psi)$ denote the set of verified SOPPs retrieved for an unverified SOPP $\psi$ by the descriptor-based rule. If $|\mathcal{C}(\psi)| \le \mu^\text{candi}$, all retrieved candidates are examined. Otherwise, only a subset of size $\mu^\text{candi}$ is selected from $\mathcal{C}(\psi)$ for detailed dominance checking. The selection and ordering of this subset are further guided by the hot-store sampling rule introduced in Section~\ref{sec:strategies_hot_store}.

Not every verified SOPP is equally useful for future screening. If all verified SOPPs are retained, the repository may become populated by weak reference candidates that enlarge retrieval sets without materially improving dominance detection. To mitigate this effect, we use the branch count from the CP solver, which is employed in the final stage of the sequential verification process described in Section \ref{sec:alg_ver_hier}, as a proxy for verification effort. A high branch count suggests that both the heuristic and dominance stages yield inconclusive answers, and that feasibility ultimately requires extensive CP search to verify. Let $\mu^\text{entry}$ denote the repository entry threshold defined on this count. SOPPs requiring more than $\mu^\text{entry}$ explored branches to verify feasibility are stored and serve as reference points for future dominance screening.

Together, the retrieval and storage thresholds reduce the overhead of dominance screening and allow the repository of verified SOPPs to scale without requiring exhaustive comparison at every screening attempt.

\subsection{Hot-Store-Biased Candidate Ordering}
\label{sec:strategies_hot_store}

When sampling from $\mathcal{C}(\psi)$, a purely random selection may be inefficient because it treats all retrieved candidates as equally informative. In an iterative routing algorithm, recently generated routes are often more relevant to subsequent search steps than routes generated much earlier. This is because new routes are typically obtained by modifying the current or recently accepted solution, so SOPPs verified in recent iterations are more likely to share loading structures with newly generated routes. We therefore introduce a hot-store-biased sampling rule that gives higher priority to recently stored SOPPs while retaining randomness in candidate selection.

Let $\widetilde{\mathcal{C}}(\psi) = (\bar\psi_1,\ldots,\bar\psi_{|\widetilde{\mathcal{C}}(\psi)|})$ denote the ordered version of $\mathcal{C}(\psi)$ according to the hot-store priority, so that $|\widetilde{\mathcal{C}}(\psi)|=|\mathcal{C}(\psi)|$. Candidates are arranged from the most recently stored SOPP to the oldest one, and the subscript denotes the position in this ordered pool. When $|\widetilde{\mathcal{C}}(\psi)| > \mu^\text{candi}$, candidates are sampled without replacement as follows. At each draw, let $\mathcal{P}$ denote the current number of remaining candidates in the pool. A variate $u \sim \mathrm{Uniform}(0,1)$ is drawn and the selected index is
\[
q = \lfloor u^{\alpha^\text{hot}} \cdot \mathcal{P} \rfloor + 1,
\]
where $\alpha^\text{hot} \ge 1$ controls the strength of the recency bias. Because $u \in [0,1)$, raising $u$ to the power $\alpha^\text{hot}$ skews the distribution toward zero when $\alpha^\text{hot}>1$, which maps to smaller indices and hence more recently stored SOPPs. The implied selection probability of position $q$ in a pool of size $\mathcal{P}$ is
\[
p(\bar\psi_q) = \left(\frac{q}{\mathcal{P}}\right)^{1/\alpha^\text{hot}} - \left(\frac{q-1}{\mathcal{P}}\right)^{1/\alpha^\text{hot}}, \qquad q=1,\ldots,\mathcal{P}.
\]
When $\alpha^\text{hot}=1$, the rule assigns equal probability to all positions, reducing to uniform sampling over $\widetilde{\mathcal{C}}(\psi)$. Larger values of $\alpha^\text{hot}$ concentrate mass toward recently stored SOPPs. The selected candidates are then examined in increasing order of their positions in $\widetilde{\mathcal{C}}(\psi)$, so that more recently stored SOPPs are checked first. This rule does not alter the dominance criterion; it only affects which retrieved candidates are tested under the candidate cap and in what order.

\section{Numerical Experiments}\label{sec:experiments}
The effectiveness of the proposed dominance-based method is evaluated through numerical experiments based on benchmark instances of the PDP with two-dimensional loading constraints (2L-PDP) \citep{ovgu_2lpdp_instances_2017}. In the experiments, the proposed framework is embedded in a large neighborhood search (LNS) algorithm, which generates routing decisions while the dominance-based module verifies packing feasibility, as detailed in Appendix~\ref{sec:app_lns}. The selected dataset includes 25 requests, a maximum route duration of 300 distance units, and vehicle dimensions $W=40$ and $L=20$. Additional parameters introduced in this work include the vehicle operational cost $c^{\mathrm o}=1$ per unit of distance and the penalty cost $c^{\mathrm p}=500$ per unserved request.
As the proposed method is intended as a generic module applicable to a broad range of routing algorithms, the numerical experiments focus on runtime performance rather than optimality. To isolate the effect of feasibility verification, a relatively large vehicle deployment cost $c^{\mathrm d}=300$ is adopted, thereby reducing routing variability and emphasizing differences induced by feasibility checks.

Algorithmic parameters are set as follows. The maximum number of retrieved candidates for each unverified SOPP is set to $\mu^\text{candi}=30$, and the repository entry threshold is set to $\mu^\text{entry}=60$, whose values are justified in Section~\ref{sec:re_speedup}.
The descriptor-based retrieval procedure uses discretized values of the three aggregate descriptors to organize the repository. Let $\Delta_A$, $\Delta_S$, and $\Delta_M$ denote the discretization widths for the total-area, maximum-side-length, and maximum-single-item-area descriptors, respectively. In the baseline setting, the resulting values are $\Delta_A=160$, $\Delta_S=1$, and $\Delta_M=16$, following the instance-dependent construction described in Electronic Companion. 
The bias factor is set to $\alpha^\text{hot}=2$.

For each experimental setting, the algorithm is run 10 times, and the median is reported to reduce the impact of heuristic variability. The CP-SAT solver is used to solve the CP formulation. All computational experiments are conducted on the DelftBlue supercomputer \citep{DHPC2024}. Each parallel run uses a single CPU core, ensuring a fair comparison of runtimes across instances.

\subsection{Benchmark Comparison and Performance Analysis}\label{sec:re_benchmark}
This section evaluates the computational performance of the proposed dominance-based method by benchmarking it against a standard sequential approach in which the dominance-based verification stage is skipped. In the benchmark approach, feasibility is checked directly using the heuristic packing algorithm, followed by the exact algorithm if necessary. Sections~\ref{sec:re_benchmark_item} and \ref{sec:re_benchmark_reg} examine the effects of item size, item shape, and trunk size scaling on relative computational performance. 
Section~\ref{sec:re_benchmark_burden} investigates the mechanism of computational improvement through verification burden.

\subsubsection{Impact of Item Size and Trunk Size Scaling}\label{sec:re_benchmark_item}
Item sizes and trunk sizes jointly determine the tightness of the packing problem and therefore influence the effectiveness of dominance verification. To examine how these two factors affect the relative computational performance of the proposed method, one instance is taken as the baseline case and systematically perturbed along the item-size and trunk-size dimensions.

Two scaling factors are introduced. Let $(\alpha_{\mathrm{s}}, \alpha_{\mathrm{e}})$ denote the item-size scaling factors, and let $\beta$ denote the trunk-size scaling factor. To vary item sizes without substantially changing the average packing difficulty, the average item area is kept approximately unchanged. Specifically, a proportion $\varsigma$ of items is shrunk by a factor $\alpha_{\mathrm{s}}$, while the remaining proportion $1-\varsigma$ is enlarged by a factor $\alpha_{\mathrm{e}}$, where
\begin{equation}\label{eq:item_scale_factor}
    \varsigma \, {\alpha_{\mathrm{s}}}^2 + (1-\varsigma) \, {\alpha_{\mathrm{e}}}^2 = 1.
\end{equation}
In this experiment, shrinking and enlarging are applied with equal probability, i.e., $\varsigma = 0.5$. The shrinking factor $\alpha_{\mathrm{s}}$ ranges from $0.1$ to $1.0$, and the enlarging factor $\alpha_{\mathrm{e}}$ is derived from Eq.~\eqref{eq:item_scale_factor}. When $\alpha_{\mathrm{s}} = 1$, both factors equal one and all item sizes are preserved; a smaller $\alpha_{\mathrm{s}}$ implies a larger $\alpha_{\mathrm{e}}$, reflecting greater heterogeneity in item sizes. The trunk-size scaling factor $\beta$ ranges from $0.6$ to $1.0$ and is applied to the trunk width.

Figure~\ref{fig:scale_item} reports the median objective and runtime improvements over the benchmark for each parameter combination. Statistical significance within each cell is assessed using a one-sided paired Wilcoxon signed-rank test at the 0.05 level, which is appropriate for matched observations when the distribution of paired differences is not assumed to be normal \citep{wilcoxon1945individual,conover1999practical}.
\begin{figure}[htbp]
    \centering
    \begin{subfigure}[b]{0.48\linewidth}
        \centering
        \includegraphics[width=\linewidth]{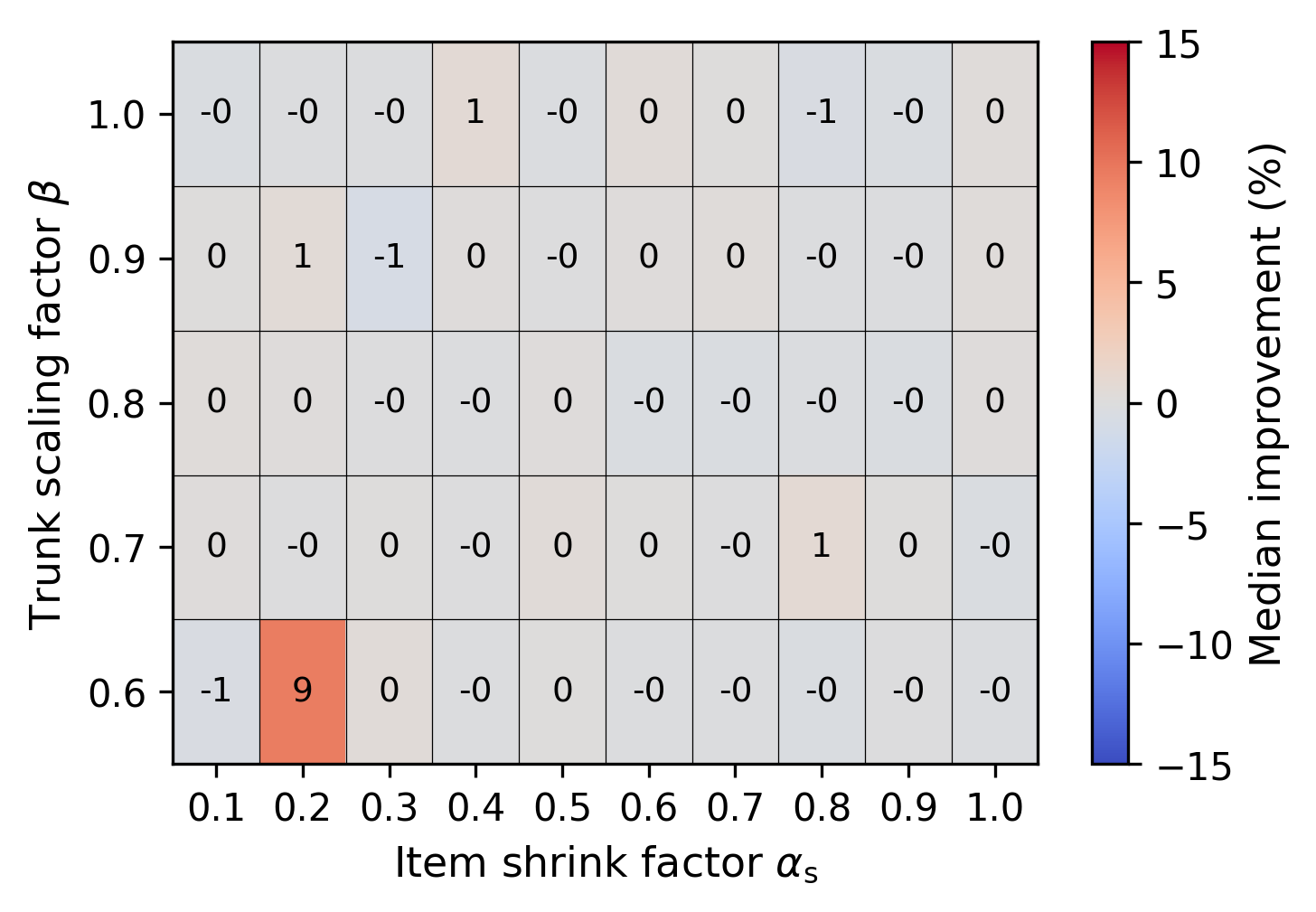}
        \caption{Objective}
        \label{fig:scale_item_obj}
    \end{subfigure}
    \hfill
    \begin{subfigure}[b]{0.48\linewidth}
        \centering
        \includegraphics[width=\linewidth]{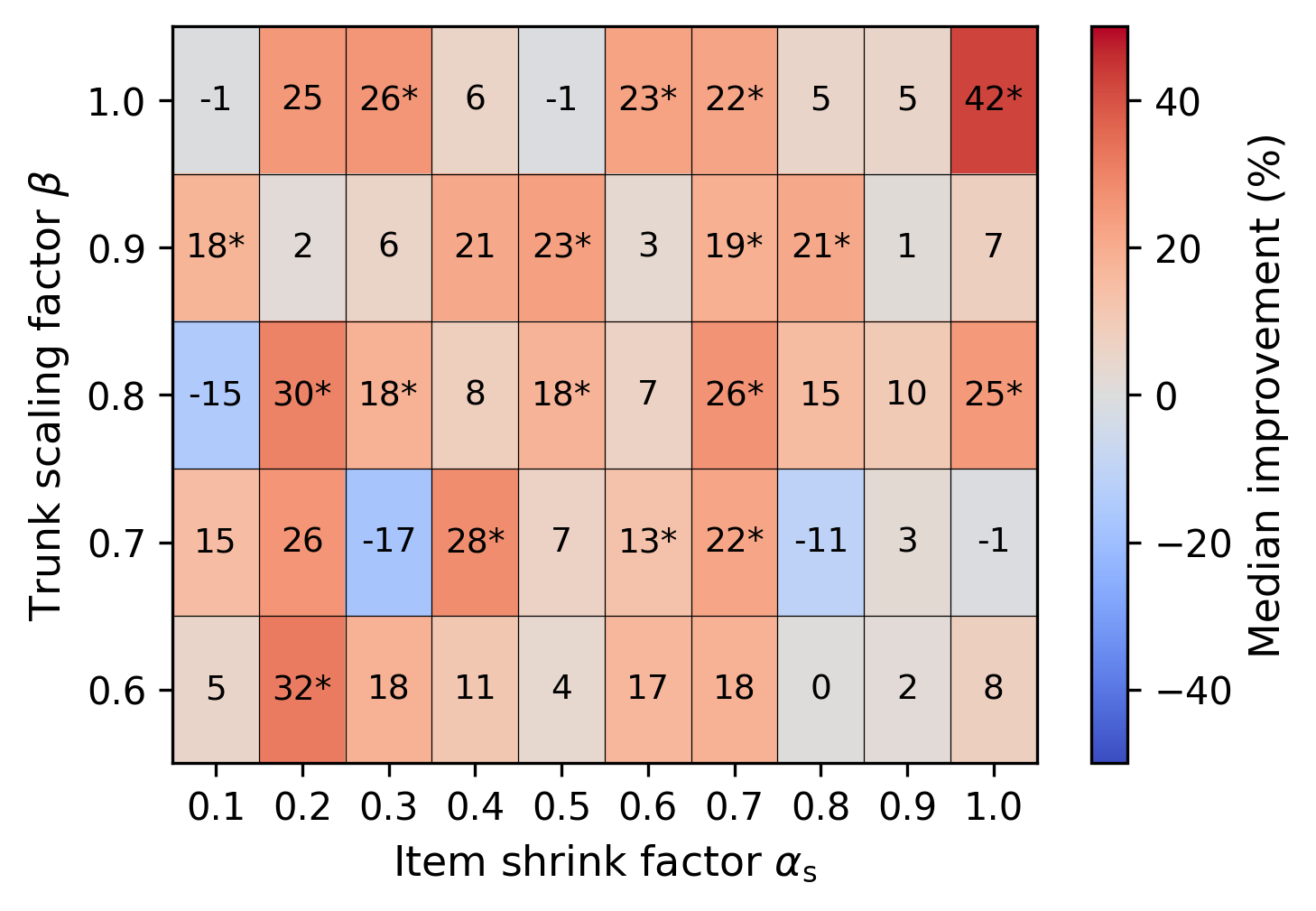}
        \caption{Runtime}
        \label{fig:scale_item_time}
    \end{subfigure}
    \caption{Median improvement over the benchmark across item-size and trunk-width scaling. Positive values indicate better performance of the proposed method, corresponding to a smaller objective value in (a) and a smaller runtime in (b). An asterisk $^*$ denotes statistical significance at the 0.05 level based on a one-sided paired Wilcoxon signed-rank test.}
    \label{fig:scale_item}
\end{figure}

Figure~\ref{fig:scale_item_obj} shows negligible objective differences between the two approaches, confirming that the proposed dominance rule primarily acts as a feasibility screening technique with little impact on solution quality. 
Figure~\ref{fig:scale_item_time} further shows that the proposed method generally reduces runtime across most combinations of the item shrink factor $\alpha_{\mathrm{s}}$ and trunk scaling factor $\beta$. The median runtime improvement is positive in the majority of tested settings, with several cases achieving improvements above 20\% and a maximum improvement of 42\%.

Overall, these results show that the proposed dominance-based screening can substantially accelerate feasibility checking without deteriorating solution quality. At the same time, the magnitude of the runtime improvement is instance-dependent, indicating that the benefit of dominance transfer is shaped by the detailed structure of the generated routes and packing configurations rather than by item or trunk scaling alone.

\subsubsection{Impact of Item Shape and Trunk Size Scaling}\label{sec:re_benchmark_reg}
While item sizes determine the overall tightness of the packing problem, item shapes influence the geometric flexibility available for transferring feasibility across SOPPs. This subsection therefore examines how item-shape regularization affects the relative computational performance of the proposed method under different trunk-size levels.

In addition to the trunk-size scaling factor $\beta$, a regularization ratio $\gamma$ is introduced to modify item shapes while keeping item areas approximately unchanged. For an item $i$ with width $w_i$ and height $h_i$, let $\delta_i$ and $\delta_i'$ denote its original and modified aspect ratios, respectively:
\[
\delta_i = \frac{\max(w_i,h_i)}{\min(w_i,h_i)} \geq 1, \qquad
\delta_i' = 1 + (1-\gamma)(\delta_i-1).
\]
The modified width $w_i'$ and height $h_i'$ are then computed as
\[
w_i' = \left\lceil \sqrt{w_i h_i \delta_i'} \right\rceil, \qquad
h_i' = \left\lceil \sqrt{\frac{w_i h_i}{\delta_i'}} \right\rceil.
\]
Hence, $\gamma=0$ preserves the original item shape, whereas $\gamma=1$ regularizes all items to square-like shapes. In the experiment, $\gamma$ ranges from $0$ to $1$, and $\beta$ ranges from $0.6$ to $1.0$.
\begin{figure}[htbp]
    \centering
    \begin{subfigure}[b]{0.48\linewidth}
        \centering
        \includegraphics[width=\linewidth]{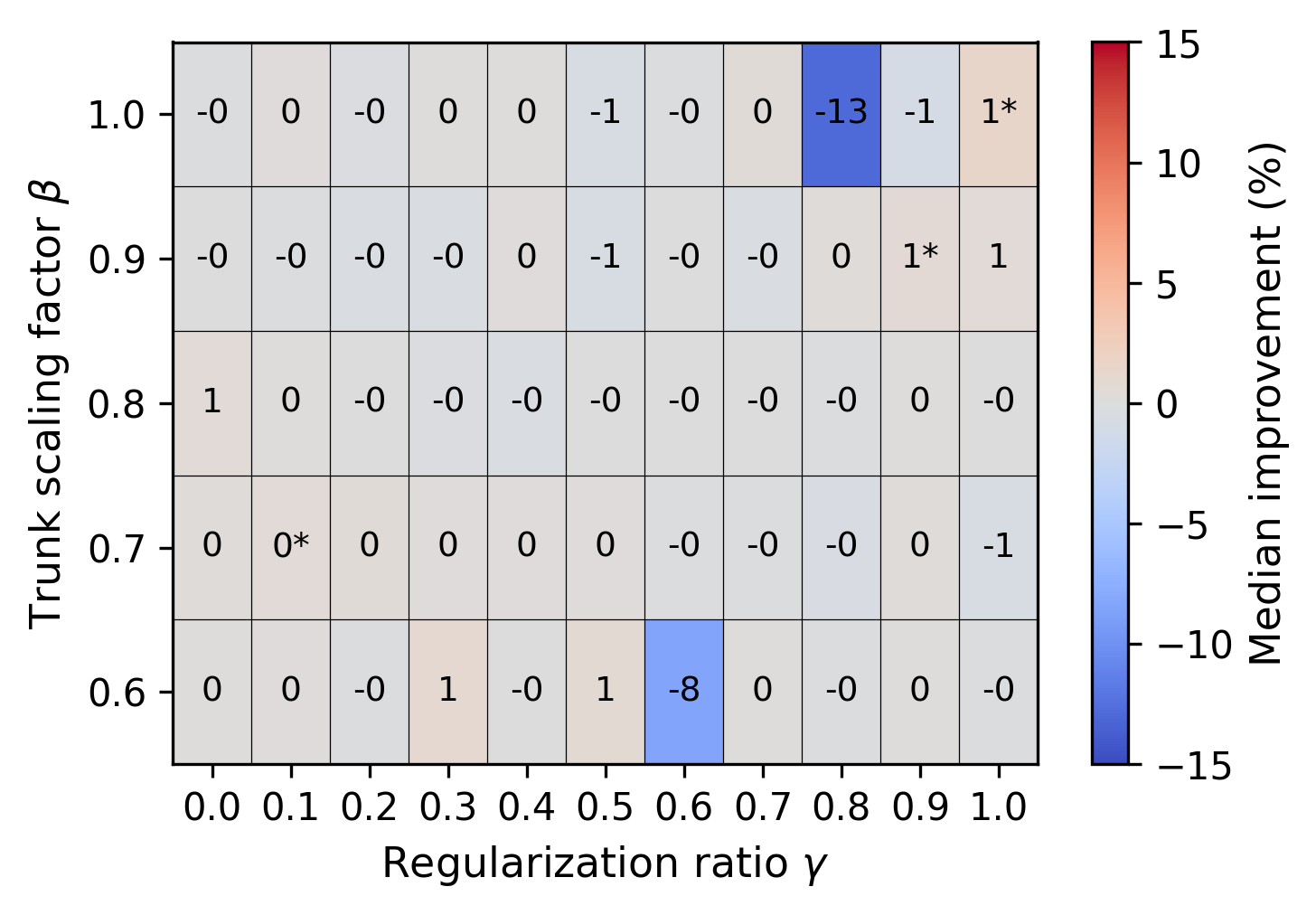}
        \caption{Objective}
        \label{fig:reg_item_obj}
    \end{subfigure}
    \hfill
    \begin{subfigure}[b]{0.48\linewidth}
        \centering
        \includegraphics[width=\linewidth]{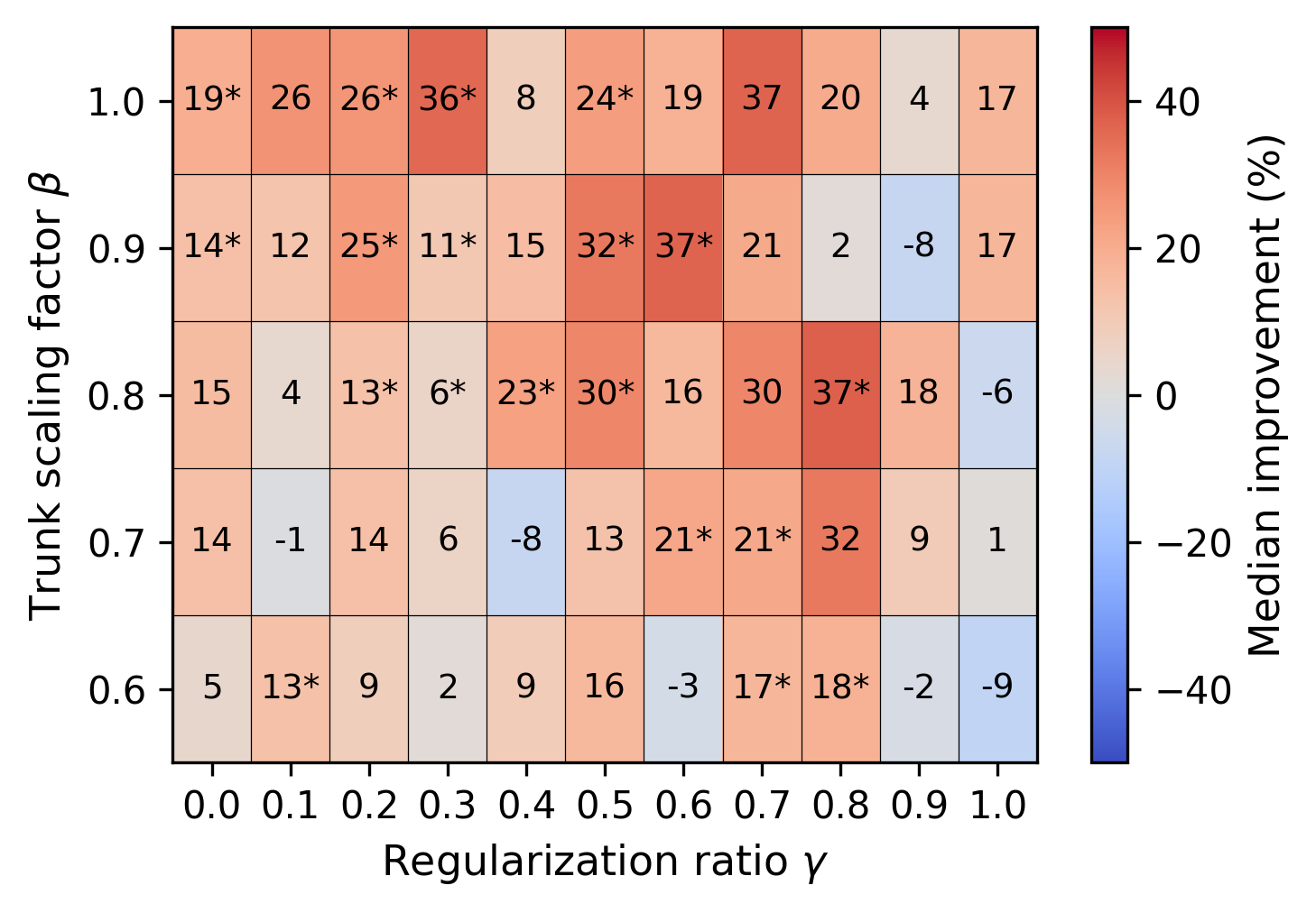}
        \caption{Runtime}
        \label{fig:reg_item_time}
    \end{subfigure}
    \caption{Median improvement over the benchmark across item shape regularization ratio and trunk-width scaling. Positive values indicate better performance of the proposed method, corresponding to a smaller objective value in (a) and a smaller runtime in (b). An asterisk $^*$ denotes statistical significance at the 0.05 level based on a one-sided paired Wilcoxon signed-rank test.}
    \label{fig:reg_item}
\end{figure}

Figure~\ref{fig:reg_item} reports the median objective and runtime improvements over the benchmark. 
Figure~\ref{fig:reg_item_obj} again shows negligible objective differences, consistent with the role of the proposed dominance rule as a feasibility screening technique.
Figure~\ref{fig:reg_item_time} shows that achieves positive median runtime improvement over most combinations of regularization ratio and trunk scaling factor, with several improvements exceeding 30\%. The largest improvement of 37\% is observed at $\gamma=$ 0.6--0.9. This broad improvement suggests that the method is not restricted to a narrow class of instances.

The runtime improvements are most pronounced at moderate item-regularization levels. In this range, item shapes retain some geometric diversity while avoiding highly irregular configurations. This balance can support dominance verification in two ways: geometric diversity provides more opportunities for matching items into previously verified packing layouts, while moderate regularity helps preserve structural similarity across SOPPs. As a result, feasible packing patterns are more likely to be transferable from verified SOPPs to unverified ones.
The results also indicate that larger trunk scaling factors can further strengthen the runtime benefit. Larger trunks provide more spatial flexibility when embedding items from an unverified SOPP into a previously verified packing plan, increasing the chance that dominance can certify feasibility before the algorithm falls back to exact checking.

Overall, these results show that the proposed dominance-based screening is effective across a broad range of geometric settings. Its strongest benefits arise when item shapes are moderately regularized and the trunk provides sufficient spatial flexibility, allowing feasible packing structures to be transferred more effectively across SOPPs.

\subsubsection{Mechanism of Computational Improvement}\label{sec:re_benchmark_burden}
To further examine the mechanism behind the computational gains of the proposed approach, this section analyzes the trade-off between the additional cost of dominance screening and the exact-verification effort it can avoid. Although dominance screening introduces additional verification overhead, successful dominance checks can prevent SOPPs from reaching the substantially more expensive exact-verification stage. To quantify this trade-off, we introduce a run-level measure that summarizes the average verification cost required by the sequential verification process. For a given run $u$, let $p_u^{\mathrm{H}}$, $p_u^{\mathrm{D}}$, and $p_u^{\mathrm{C}}$ denote the proportions of SOPPs whose feasibility is verified at the heuristic, dominance, and CP stages, respectively. By construction, 
$p_u^{\mathrm{H}} + p_u^{\mathrm{D}} + p_u^{\mathrm{C}} = 1$.

To reflect the computational effort associated with this sequential pipeline, we define a time-weighted verification cost. Let $\tau_u^{\mathrm{H}}$, $\tau_u^{\mathrm{D}}$, and $\tau_u^{\mathrm{C}}$ denote the median cumulative verification time of SOPPs that terminate at the heuristic, dominance, and CP stages in run $u$, respectively. The time-weighted verification cost is then defined as
\[
    B_u = \tau_u^{\mathrm{H}} p_u^{\mathrm{H}} 
        + \tau_u^{\mathrm{D}} p_u^{\mathrm{D}} 
        + \tau_u^{\mathrm{C}} p_u^{\mathrm{C}} .
\]
A smaller value of $B_u$ indicates that, on average, SOPPs are verified with lower cumulative verification cost in the proposed pipeline, either because more SOPPs terminate before the expensive CP stage or because the stages they pass through require less time.

To relate this cost measure to computational performance, we define a binary run-level indicator equal to one if the proposed method is faster than the benchmark and zero otherwise. The analysis is based on the pooled solutions from the size-scaling experiments in Section~\ref{sec:re_benchmark_item} and the shape-regularization experiments in Section~\ref{sec:re_benchmark_reg}.
Figure~\ref{fig:dom_outperformance_vs_burden} shows the empirical outperformance probability, computed as the average of this binary indicator, across different levels of the time-weighted verification cost, together with 95\% Wilson confidence intervals and the number of runs in each bin.

\begin{figure}[htbp]
    \centering
    \begin{subfigure}[b]{0.48\linewidth}
        \centering
        \includegraphics[width=\linewidth]{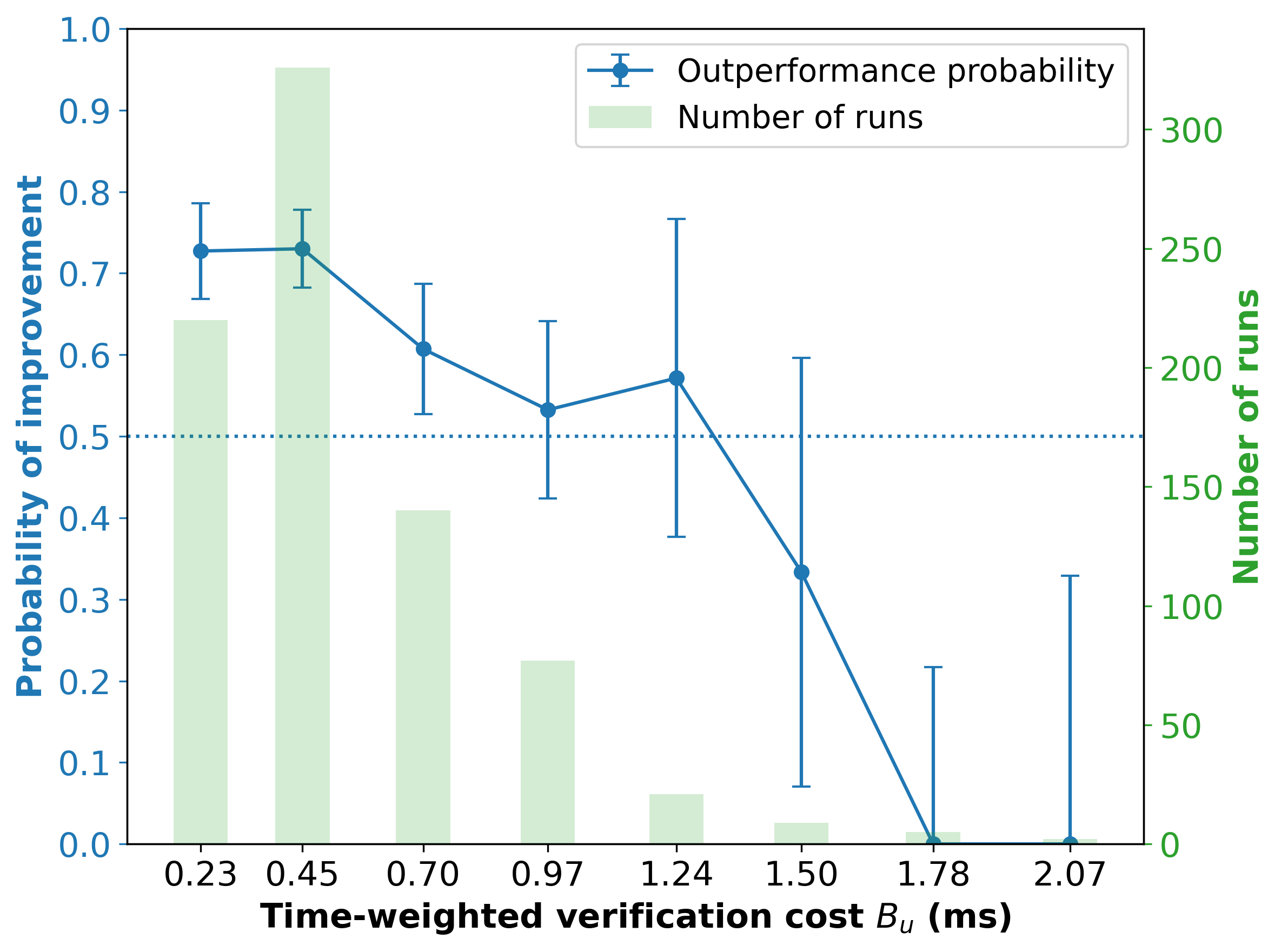}
        \caption{Outperformance probability}
        \label{fig:dom_outperformance_vs_burden}
    \end{subfigure}
    \hfill
    \begin{subfigure}[b]{0.48\linewidth}
        \centering
        \includegraphics[width=\linewidth]{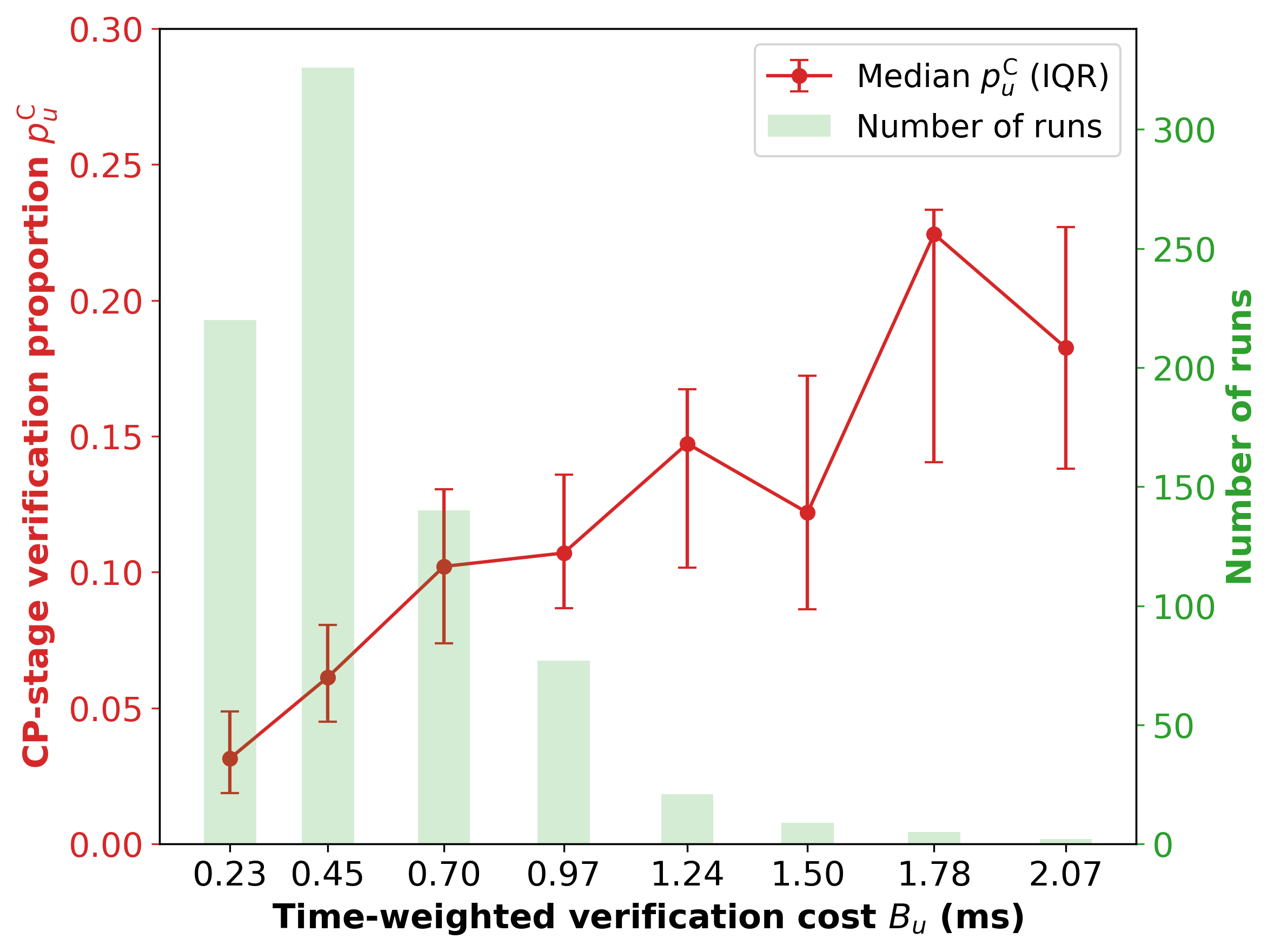}
        \caption{CP-stage verification proportion}
        \label{fig:dom_outperformance_vs_cp}
    \end{subfigure}
\caption{Outperformance probability and CP-stage verification proportion across time-weighted verification-cost bins.}
\end{figure}

The results show that the proposed method outperforms the benchmark in most bins of the verification-cost measure, indicating that the dominance stage is generally beneficial across a broad range of verification conditions. The outperformance probability is particularly high in the low-cost region, where dominance screening successfully verifies a substantial share of SOPPs, thereby avoiding more expensive CP calls. In these cases, the reduction in CP effort outweighs the cost of the additional dominance checks, yielding a net computational gain.
Figure~\ref{fig:dom_outperformance_vs_cp} complements the outperformance analysis by showing how residual exact verification changes across the verification-cost range. 
The median CP-stage verification proportion $p_u^{\mathrm{C}}$ increases from the low- to high-cost bins, indicating that larger values of $B_u$ are associated with a greater share of SOPPs reaching the CP stage.
This provides mechanism-level support for Figure~\ref{fig:dom_outperformance_vs_burden}: the reduction in outperformance probability at higher values of $B_u$ is linked to weaker filtering by the preceding verification stages. The sparsely populated high-cost bins mainly characterize the upper tail of the $B_u$ distribution.

As the time-weighted verification cost increases, the outperformance probability tends to decrease, reflecting the opposite side of the same trade-off. When dominance screening verifies few SOPPs or incurs high per-check cost, $B_u$ increases: more SOPPs proceed to CP, or the dominance stage itself contributes substantially to verification time. In such cases, the proposed method pays the dominance-screening cost but obtains limited CP savings, so the additional stage may weaken or eliminate the runtime advantage over the benchmark. Nevertheless, most runs fall in bins where the proposed method is more likely to outperform the benchmark, with wider confidence intervals mainly appearing in the sparsely populated high-cost region.

Overall, the analysis shows that the effectiveness of the proposed framework depends on the balance between dominance-screening overhead and avoided CP effort, with the strongest gains concentrated in runs where dominance screening replaces a meaningful share of exact feasibility checks.

\subsection{Effect of Speed-up Techniques}\label{sec:re_speedup}

To evaluate the computational impact of the speed-up techniques proposed in Sections~\ref{sec:strategies_threshold} and~\ref{sec:strategies_hot_store}, this section examines how the hot-store-biased sampling rule and the threshold-based storage and retrieval rules affect the balance between dominance-reuse effectiveness and screening overhead. We first isolate the effect of the hot-store-biased sampling rule through a paired comparison between the proposed method with and without this rule. Pooling the two experiment series in Sections~\ref{sec:re_benchmark_item} and~\ref{sec:re_benchmark_reg}, the hot-store-biased sampling rule improves runtime by a median of 11.72\% and outperforms the non-hot-store variant in 63.4\% of the paired runs. The runtime reduction is statistically significant under the one-sided paired Wilcoxon signed-rank test at the 5\% significance level. The objective values remain essentially unchanged, indicating that the gain is mainly computational.

We next examine the candidate retrieval size $\mu^{\text{candi}}$ and the repository entry threshold $\mu^{\text{entry}}$. This experiment uses the baseline instance setting and varies only the two speed-up parameters, with both $\mu^{\text{candi}}$ and $\mu^{\text{entry}}$ ranging from 5 to 150. Figure~\ref{fig:hmp_grid_obj} reports the median and interquartile range (IQR) of the objective values over the tested parameter grid, while Figure~\ref{fig:hmp_grid_time} presents the corresponding statistics for runtime.

\begin{figure}[ht]
    \centering
    \includegraphics[width=0.9\linewidth]{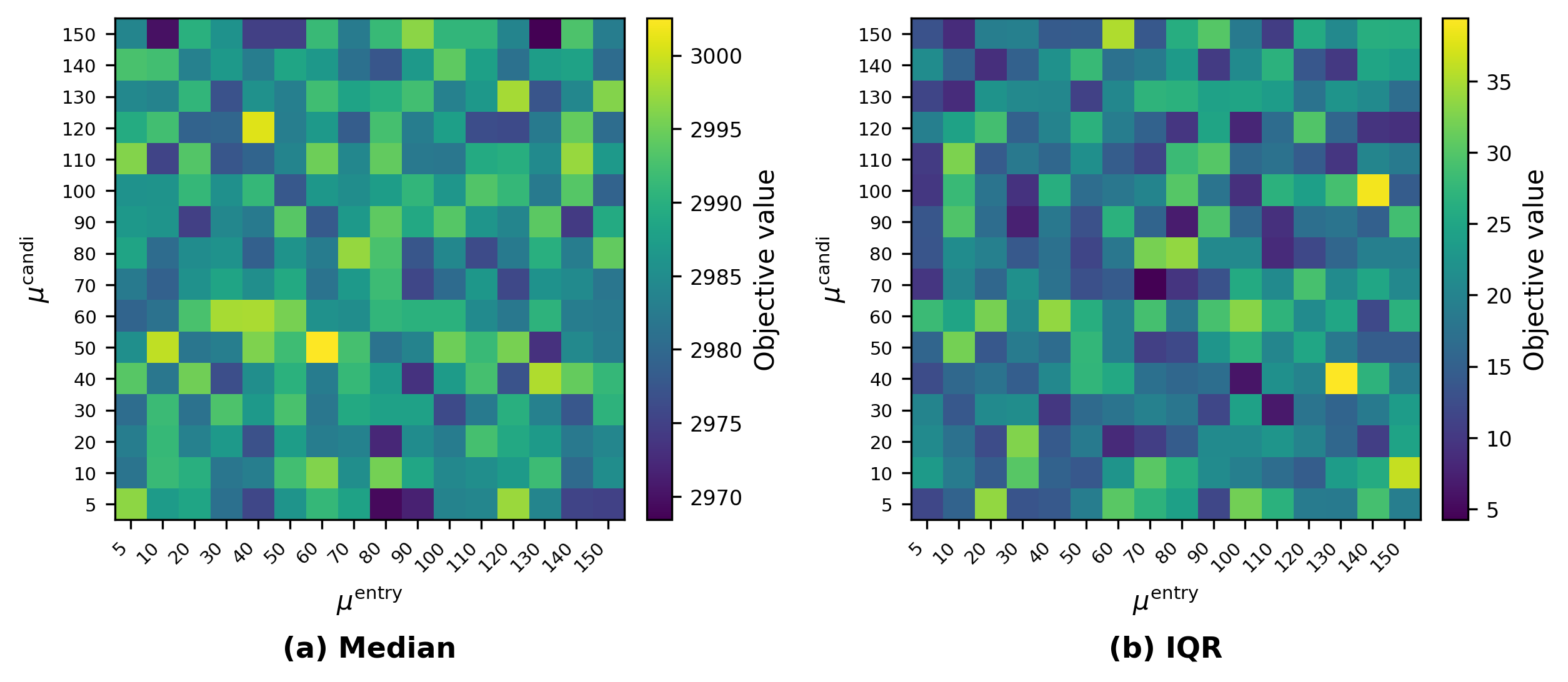}
    \caption{Median and IQR of the objective value over the $(\mu^\text{candi}, \mu^\text{entry})$ grid}
    \label{fig:hmp_grid_obj}
\end{figure}
\begin{figure}[ht]
    \centering
    \includegraphics[width=0.9\linewidth]{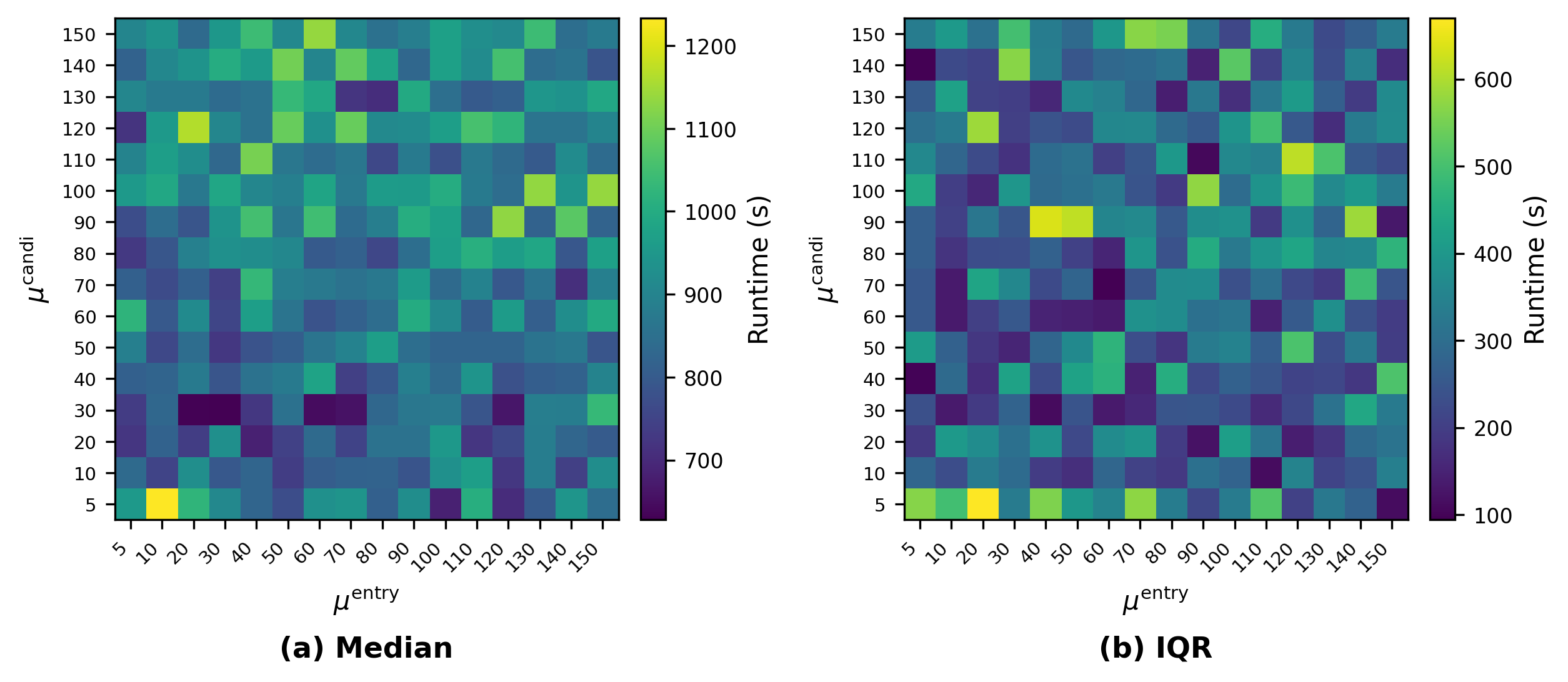}
    \caption{Median and IQR of runtime over the $(\mu^\text{candi}, \mu^\text{entry})$ grid}
    \label{fig:hmp_grid_time}
\end{figure}

Figure~\ref{fig:hmp_grid_obj} shows that the objective values remain highly stable across the entire parameter grid. The median objective value ranges only from approximately 2968 to 3003, confirming that $\mu^{\text{candi}}$ and $\mu^{\text{entry}}$ mainly affect the computational effort of packing-feasibility screening, while having negligible influence on the solution quality determined by the routing algorithm.

By contrast, Figure~\ref{fig:hmp_grid_time} shows that runtime is substantially affected by the two threshold parameters. The median runtime varies from approximately 628 seconds to 1233 seconds across the tested grid, indicating that parameter selection has a clear impact on computational efficiency. 
The lowest median runtime is obtained at $(\mu^{\text{candi}},\mu^{\text{entry}})=(30,30)$, with a median runtime of 628 seconds. The setting $(30,60)$ achieves a comparable median runtime of 648 seconds, while reducing the runtime IQR from 277 to 136 seconds, corresponding to a reduction of about 51.0\%. This indicates better stability across runs and supports the use of $(\mu^{\text{candi}},\mu^{\text{entry}})=(30,60)$ as the baseline setting. More broadly, the concentration of low median runtimes in this intermediate region of the grid suggests that overly restrictive or overly permissive threshold settings can reduce computational efficiency.

The candidate retrieval size $\mu^{\text{candi}}$ has a particularly direct impact on runtime. Averaged over all values of $\mu^{\text{entry}}$, the lowest median runtime is obtained when $\mu^{\text{candi}}=30$, followed by $\mu^{\text{candi}}=20$ and $\mu^{\text{candi}}=10$. This pattern is consistent with the role of $\mu^{\text{candi}}$: when it is too small, useful reference SOPPs may be missed, reducing the chance that dominance can verify feasibility before the exact check is needed; when it is too large, the dominance procedure may examine many candidates for each SOPP, increasing the worst-case screening effort. 

The repository entry threshold $\mu^{\text{entry}}$ further affects runtime by shaping the composition of the repository. In the tested grid, low entry thresholds such as $\mu^{\text{entry}}\le 20$ create a broader repository but may introduce more weak or redundant reference candidates, whereas high thresholds such as $\mu^{\text{entry}}\ge 90$ make the repository more selective but may exclude reusable SOPPs. This repository composition interacts with $\mu^{\text{candi}}$: an overly selective repository combined with a small candidate cap may make dominance reuse depend heavily on whether the sampled candidates happen to include a structurally compatible reference, while a broad repository combined with a large candidate cap may increase screening overhead without a proportional gain in dominance success. Therefore, the two parameters jointly control the balance between reuse opportunities and screening overhead: $\mu^{\text{entry}}$ determines what is stored, while $\mu^{\text{candi}}$ determines how much dominance screening is attempted.

Figure~\ref{fig:hmp_grid_time}b further shows that runtime variability is also sensitive to the parameter setting, suggesting that inappropriate threshold choices can make the computational performance more dependent on the specific SOPPs generated in each run. In contrast, moderate values of the two parameters generally provide a better balance between median runtime and stability.

Overall, the sensitivity analysis suggests that the two thresholds mainly influence computational efficiency rather than solution quality. For the tested instances, avoiding extreme values of $\mu^{\text{candi}}$ is especially important because it directly controls the worst-case number of dominance trials.

\section{Conclusions}\label{sec:conclusions}
This study addresses the computational burden of repeated packing-feasibility verification in packing-constrained routing problems by introducing a dominance-based framework that reuses previously verified packing states to infer the feasibility of new ones. By leveraging structural relationships among route-dependent packing configurations, the framework reduces the reliance on repeated exact feasibility checks and transforms packing verification into a reusable inference process.

Computational results demonstrate that the proposed dominance-based screening method substantially accelerates feasibility verification across a broad class of instances, achieving runtime reductions of up to 42\% without affecting solution quality. The performance gain is primarily driven by the reduction in exact-verification calls, confirming the effectiveness of dominance-based inference as a computational screening mechanism.
The mechanism analysis further shows that the value of dominance screening depends on the trade-off between its additional verification cost and the exact-verification effort it can eliminate.
The proposed method is most effective when dominance screening is sufficiently informative to divert a substantial fraction of SOPPs away from the most computationally expensive stage, making the added screening cost worthwhile.
The sensitivity analysis further indicates that more stable runtime performance is achieved when the entry and retrieval thresholds are kept within empirically favorable ranges. The entry threshold shapes the informativeness and selectivity of the repository, while the retrieval threshold controls the extent to which stored structures are exploited without excessive dominance-screening overhead. Additionally, the hot-store-biased sampling rule provides a further computational gain by prioritizing recently verified structures, reducing median runtime by 11.72\%.

Several directions for future research emerge from this work. Future research could investigate how reusable packing-feasibility information can be integrated into the routing search itself, so that dominance relations not only screen candidate routes but also guide the generation of packing-compatible route structures. 
Another direction is to study cross-instance reuse of verified packing states, where repositories constructed from previously solved instances may support feasibility screening for new instances with similar item and route structures. This idea could be combined with learning-based methods, including reinforcement learning, to adaptively prioritize stored states or guide routing decisions based on past verification outcomes. 
The framework may also be extended to richer packing environments, including three-dimensional loading, weight distribution, stacking, fragility, multi-compartment vehicles, and alternative accessibility rules, where the transferability of feasible packing structures becomes more complex. 
Finally, stronger dominance conditions could be developed to characterize classes of packing-constrained routing instances in which feasibility transfer can be verified more tightly or even exactly.

%\clearpage
\appendix
\titleformat{\section}{\normalfont\Large\bfseries}
  {Appendix \thesection}{1em}{}

\section{Exact packing feasibility check}\label{sec:app_exa}

\renewcommand{\thefigure}{A\arabic{figure}}
\setcounter{figure}{0}
\renewcommand{\thetable}{A\arabic{table}}
\setcounter{table}{0}
\renewcommand{\theequation}{A\arabic{equation}}
\setcounter{equation}{0}

Packing feasibility is verified exactly via a constraint programming (CP) formulation. CP approaches have been shown to be effective for solving the two-dimensional orthogonal packing problem (2D-OPP) and its variants \citep{pisinger2007using, clautiaux2008new}. Our formulation extends 2D-OPP by incorporating sequence-dependent no-relocation constraints. Since the purpose is only to determine whether a feasible placement exists, no objective function is introduced.
Given a bin $i$ with size $(W_i,L_i)$ and the set of pieces assigned to it, where the bin may denote either the vehicle trunk or an in-bin packing region arising in dominance verification, the formulation checks whether a feasible placement exists under Conditions~(P1)--(P3) introduced in Section~\ref{sec:problem}. The notation used below is summarized in Table~\ref{tab:notation_opp}.

\textbf{(1) Bin boundary and rotation.}
Constraints \eqref{eq:cp_boundary_x}--\eqref{eq:cp_rot_l} enforce Conditions~(P1) and~(P2) by requiring each piece to lie entirely within the bin while accounting for rotation.
\begin{align}
x_j + \bar{w}_j &\le W_i && \forall j \in J, \label{eq:cp_boundary_x}\\
y_j + \bar{l}_j &\le L_i && \forall j \in J, \label{eq:cp_boundary_y}\\
\bar{w}_j &= (1-\xi_j) w_j + \xi_j l_j && \forall j \in J, \label{eq:cp_rot_w}\\
\bar{l}_j &= (1-\xi_j) l_j + \xi_j w_j && \forall j \in J. \label{eq:cp_rot_l}
\end{align}

\textbf{(2) Non-overlap.}
Constraint \eqref{eq:cp_nonoverlap} enforces Condition~(P1) by requiring every pair of distinct pieces to be separated horizontally or vertically.
\begin{align}
x_j + \bar{w}_j \le x_{j'} \;\vee\;
x_{j'} + \bar{w}_{j'} \le x_j \;\vee\;
y_j + \bar{l}_j \le y_{j'} \;\vee\;
y_{j'} + \bar{l}_{j'} \le y_j
\quad \forall j,j' \in J,\; j \neq j'.\label{eq:cp_nonoverlap}
\end{align}

\textbf{(3) No-relocation constraints.}
Constraint \eqref{eq:cp_nr} enforces the operation-accessibility requirement in Condition (P3a). For each precedence pair $(i,i') \in \theta^\text{p} \cup \theta^\text{d}$, piece $i$ must remain rear-accessible relative to piece $i'$.
\begin{align}
x_j + \bar{w}_j \le x_{j'} \;\vee\;
x_{j'} + \bar{w}_{j'} \le x_j \;\vee\;
y_j \ge y_{j'} + \bar{l}_{j'}
\quad \forall (j,j') \in \theta^\text{p} \cup \theta^\text{d}. \label{eq:cp_nr}
\end{align}

\textbf{(4) Fixed placements.}
Constraint \eqref{eq:cp_fix_pos} enforces the consistent-placement requirement in Condition (P3b) by fixing the positions and orientations of pieces inherited from predecessor SOPPs, in accordance with Proposition~\ref{prop:route_feasibility}.
\begin{align}
x_j = \hat{x}_j,\; y_j = \hat{y}_j,\; \xi_j = \hat{\xi}_j 
\quad \forall j \in J^{\text{fix}}.\label{eq:cp_fix_pos}
\end{align}

% \textbf{(5) Domains.}
% Constraint \eqref{eq:cp_domain} defines the variable domains.
% \begin{align}
% x_i, y_i \ge 0,\; \xi_i \in \{0,1\} \quad \forall i \in I.\label{eq:cp_domain}
% \end{align}

\section{Heuristic packing feasibility check}\label{sec:app_heu}
\renewcommand{\thefigure}{B\arabic{figure}}
\setcounter{figure}{0}
\setcounter{algorithm}{0}
\renewcommand{\thealgorithm}{B\arabic{algorithm}}

This section describes a conservative maximum-open-space (MOS)-based heuristic, summarized in Algorithm~\ref{alg:mos_heuristic}, used as a feasibility oracle within the hierarchical checking scheme. For a given SOPP, items are placed sequentially using a first-fit rule on a set of MOSs, following the principles in \citet{wei2018simulated}.

\textbf{Inherited placements and consistency.}
Following Proposition~\ref{prop:route_feasibility}, when verifying an SOPP with predecessors, placements of items shared with preceding SOPPs are first inherited and treated as fixed. The heuristic then attempts to place the remaining items without any repositioning.

\textbf{MOS representation and placement rule.}
An MOS is a rectangular free region represented by its bottom-left corner and dimensions, $M=(x,y,w,h)$. Let $\mathcal{M}$ denote the current set of MOSs. For each unfixed item $j$, the heuristic evaluates all current MOSs under both orientations, represented by the rotation indicator $\xi\in\{0,1\}$, where $\xi=0$ denotes the original orientation and $\xi=1$ denotes a $90^\circ$ rotation, consistent with the CP formulation. A candidate placement is feasible if the item fits within the MOS and satisfies Condition~(P3a). Let $\placement_j$ denote the set of feasible candidate placements for item $j$. Among feasible candidates, the heuristic applies a bottom-left rule: it selects the candidate with smallest rear coordinate $y$ and breaks ties by smallest lateral coordinate $x$.

\textbf{MOS update.}
After placing an item, the MOS list is updated by (i) removing the used MOS and generating residual MOSs, (ii) clipping any MOS that overlaps the newly placed rectangle and generating the remaining subspaces, and (iii) applying an $x$-aligned dominance filter, under which an MOS is discarded if it is contained in another MOS with the same $x$-coordinate. Figure~\ref{fig:mos_evolution} illustrates the evolution of the MOS set.

\begin{algorithm}[ht]
\caption{MOS-based heuristic packing for sequential SOPP verification}
\label{alg:mos_heuristic}
\begin{algorithmic}[1]
\State \textbf{Input:} Bin $(W_i,L_i)$; ordered collection of SOPPs $\Psi_\route$ associated with route $\route$
\For{each SOPP $\psi \in \Psi_\route$ in ascending SOPP index}
    \State Fix placements of shared items in $J^{\mathrm{fix}}(\psi)$ inherited from predecessor SOPPs, if any
    \State Initialize the MOS set $\mathcal{M}$ by carving the bin free space around the fixed rectangles;
           if no rectangles are fixed, set $\mathcal{M} \gets \{(0,0,W_i,L_i)\}$
    \For{each item $j \in J(\psi)\setminus J^{\mathrm{fix}}(\psi)$ in the pickup order}
        \State $\placement_j \gets \emptyset$
        \For{each $M \in \mathcal{M}$ and $\xi \in \{0,1\}$}
            \If{item $j$ fits in $M$ under rotation $\xi$ and NR-p and NR-d constraints hold}
                \State $\placement_j \gets \placement_j \cup \{(M,\xi)\}$
            \EndIf
        \EndFor
        \If{$\placement_j=\emptyset$}
            \State \Return infeasible
        \EndIf
        \State Select $(M^\star,\xi^\star)\in\placement_j$ by the bottom-left rule
        \State Place item $i$ at $(M^\star,\xi^\star)$; update the MOS set $\mathcal{M}$
    \EndFor
\EndFor
\State \Return feasible and a packing plan
\end{algorithmic}
\end{algorithm}

\begin{figure}[htbp]
    \centering
    \includegraphics[width=0.8\linewidth]{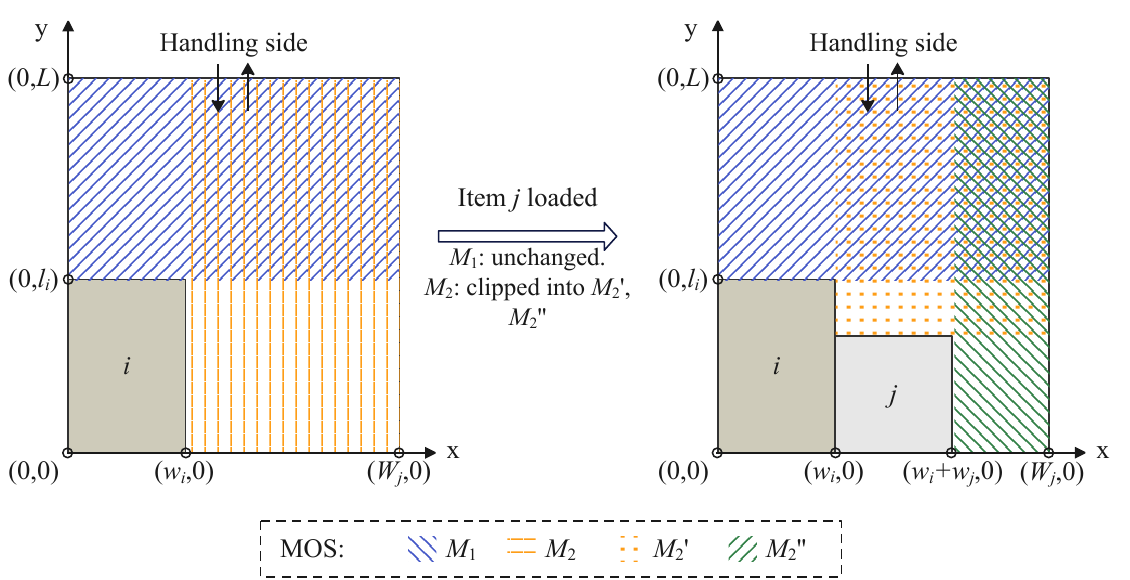}
    \caption{Evolution of MOSs after an item is loaded.}
    \label{fig:mos_evolution}
\end{figure}

\section{Iterative algorithm framework}
\label{sec:app_lns}
A large neighborhood search (LNS) with a simulated annealing (SA) acceptance criterion is used to obtain the routing decisions. The algorithm is run for $\mathcal{I}^\mathrm{max}=2000$ iterations, with initial and final temperatures $\mathcal{T}^\mathrm{start}=100$ and $\mathcal{T}^\mathrm{end}=1$, and a cooling factor $\alpha^\text{cool}=0.9972$. Five destroy and three repair operators are employed.

Two Shaw removal operators are designed to remove $n^\text{remove}_\text{shaw}=0.2|R|$ requests per iteration using a biased random mechanism controlled by $\beta=2$ \citep{ropke2006adaptive}. The first Shaw removal $\phi^\text{d}_1$ uses an overall relatedness measure weighted by $\boldsymbol{\omega}^\text{shaw}=[2,5,9]$, capturing item size, maximum side, and spatial proximity, while $\phi^\text{d}_2$ considers spatial proximity only. 
Three worst removal operators are designed: $\phi^\text{d}_3$ and $\phi^\text{d}_4$ remove $n^\text{remove}_\text{worst}$ worst requests, selected from all routes and iteratively from randomly chosen routes, respectively, while $\phi^\text{d}_5$ removes the worst route.

One best insertion and two regret insertion operators are used, each considering at most $\lambda_2=4$ routes and $\lambda_1=4$ insertion positions. Operator $\phi^\text{r}_1$ selects the position with the least additional cost, while regret operators $\phi^\text{r}_2$ and $\phi^\text{r}_3$ are based on the difference between the globally best and second-best insertion positions and on the difference between the best insertion positions across routes, respectively.

\bibliographystyle{plainnat}
\bibliography{cas-refs}

\vspace{3em}

\begin{table}[ht]
\caption{Notation for the OPP model}
\label{tab:notation_opp}
\small
\centering
\begin{tabularx}{\textwidth}{@{}lX@{}}
\toprule
Notation & Description\\
\midrule
\multicolumn{2}{@{}l}{Sets and Parameters}\\
\midrule
$J$ & Set of pieces to be packed, indexed by $j$\\
$J^{\text{fix}}$ & Set of shared pieces inherited from the previous SOPPs\\
$W_i,\, L_i$ & Width and length of bin $i$\\
$w_j,\, l_j$ & Original width and length of piece $j \in J$\\
$\hat{x}_j,\,\hat{y}_j$ & Inherited coordinates of the bottom-left corner of piece $j \in J^{\text{fix}}$, fixed by the predecessor SOPP\\
$\hat{\xi}_j$ & Inherited rotation of piece $j \in J^{\text{fix}}$, fixed by the predecessor SOPP\\
$\theta^\text{p}$ & Set of ordered pairs $(j, j')$ indicating NR-p precedence relations, where piece $j$ is loaded later than $j'$\\
$\theta^\text{d}$ & Set of ordered pairs $(j, j')$ indicating NR-d precedence relations, where piece $j$ is unloaded earlier than $j'$\\
\midrule
\multicolumn{2}{@{}l}{Decision Variables}\\
\midrule
$x_j,\, y_j$ & Coordinates of the bottom-left corner of piece $j$\\
$\xi_j$ & Binary rotation variable; $\xi_j = 1$ if piece $j$ is rotated by $90^\circ$, and $\xi_j = 0$ otherwise\\
$\bar{w}_j,\, \bar{l}_j$ & Effective width and length of piece $j$, accounting for rotation\\
\bottomrule
\end{tabularx}
\end{table}

\clearpage
\onehalfspacing

\begin{center}
    {\LARGE\bfseries Electronic Companion\par}
\end{center}

\vspace{1em}

\setcounter{section}{0}
\setcounter{figure}{0}
\setcounter{table}{0}
\setcounter{equation}{0}

\renewcommand{\thesection}{EC.\arabic{section}}
\renewcommand{\thefigure}{EC.\arabic{figure}}
\renewcommand{\thetable}{EC.\arabic{table}}
\renewcommand{\theequation}{EC.\arabic{equation}}

\section{Implementation of Descriptor-Based Candidate Retrieval}
\label{sec:supp_descriptor_retrieval}

This section provides the implementation details of the descriptor-based retrieval rule summarized in Section 5. The purpose of this rule is to avoid comparing an unverified SOPP with every verified SOPP in the repository. Instead, verified SOPPs are indexed by aggregate descriptors that provide inexpensive necessary conditions for dominance.

Each verified feasible SOPP is stored together with its packing plan and a small set of aggregate descriptors that provide inexpensive necessary conditions for dominance. Let $\bar\Psi$ denote the repository of verified feasible SOPPs. For a verified SOPP $\bar \psi \in \bar\Psi$, define
\begin{align*}
A(\bar\psi) := \sum_{i\in\bar\psi} w_i h_i,\, \quad
S(\bar\psi) := \max_{i\in\bar\psi} \max\{w_i,h_i\},\, \quad
M(\bar\psi) := \max_{i\in\bar\psi} w_i h_i,
\end{align*}
where $A(\bar\psi)$ is the total item area, $S(\bar\psi)$ is the maximum item side length, and $M(\bar\psi)$ is the maximum single-item area.

For an unverified SOPP $\psi$, a verified SOPP $\bar\psi$ cannot dominate $\psi$ if any of the inequalities
\[
A(\bar\psi) < A(\psi),\qquad
S(\bar\psi) < S(\psi),\qquad
M(\bar\psi) < M(\psi)
\]
holds. These descriptor comparisons therefore serve as an inexpensive prescreening step before detailed dominance verification.

To support efficient retrieval, the verified-SOPP repository is organized according to discretized descriptor values. Specifically, for each descriptor $X \in \{A,S,M\}$, let
\[
o_X(x) := \left\lfloor \frac{x}{\Delta_X} \right\rfloor,
\]
where $\Delta_X > 0$ is a prescribed discretization width. The repository is then partitioned into index classes 
\[
\mathcal{O}_X(\eta) := \{\bar\psi \in \bar\Psi : o_X(X(\bar\psi)) = \eta\},
\]
where $\mathcal{O}_X(\eta)$ contains the verified SOPPs whose descriptor $X$ falls into class $\eta$.

Given an unverified SOPP $\psi$, define the threshold indices
\[
\eta_A := o_A(A(\psi)),\qquad
\eta_S := o_S(S(\psi)),\qquad
\eta_M := o_M(M(\psi)).
\]
The retrieved candidate set for SOPP $\psi$ is then
\begin{align*}
\mathcal{C}(\psi)
:=
\left(\bigcup_{\eta\ge \eta_A}\mathcal{O}_A(\eta)\right)
\cap
\left(\bigcup_{\eta\ge \eta_S}\mathcal{O}_S(\eta)\right)
\cap
\left(\bigcup_{\eta\ge \eta_M}\mathcal{O}_M(\eta)\right).
\end{align*}
Hence, only verified SOPPs whose discretized descriptor levels are compatible with those of $\psi$ are retained for further dominance checking.

For the descriptor-based repository structure, the discretization widths are determined from the instance characteristics. Specifically, the bucket width for the total-area descriptor $A(\psi)$ is set to the larger of 100 and 20\% of the vehicle trunk area. The bucket widths for the maximum-side-length descriptor $S(\psi)$ and the maximum-single-item-area descriptor $M(\psi)$ are determined from the range of item dimensions and item areas, respectively, by dividing the corresponding range into 20 buckets. These rules provide a simple instance-dependent discretization scheme that avoids overly fine indexing while preserving sufficient resolution for candidate retrieval. In the baseline setting, the resulting discretization widths are $\Delta_A=160$, $\Delta_S=1$, and $\Delta_M=16$.

The resulting set $\mathcal{C}(\psi)$ contains all verified SOPPs that pass the descriptor-based necessary conditions and are therefore retained for detailed dominance checking. Since dominance screening terminates once a dominating verified SOPP is found, the order and number of retrieved candidates examined can further affect the computational effort. This motivates the threshold and hot store introduced in Section 5.

\end{document}